\newtheorem{theorem}{Theorem}
\theoremstyle{plain}
\newtheorem{claim}[theorem]{Claim}
\newtheorem{construction}[theorem]{Construction}
\newtheorem{conjecture}[theorem]{Conjecture}
\newtheorem{corollary}[theorem]{Corollary}
\newtheorem{lemma}[theorem]{Lemma}
\newtheorem{proposition}[theorem]{Proposition}
\numberwithin{equation}{section}
\numberwithin{theorem}{section}
\numberwithin{case}{section}
\newtheorem*{lemma*}{Lemma~\ref{lem:abs1}}
\newtheorem*{Lemma*}{Lemma~\ref{lem:abs2}}
\def \cP{\mathcal{P}}
\def \bfi{\mathbf{i}}
\def \bfu{\mathbf{u}}
\def \bfv{\mathbf{v}}
\def\COMMENT#1{}
\let\COMMENT=\footnote
\begin{document}

\title{Matching of given sizes in hypergraphs}

\author{Yulin Chang, Huifen Ge, Jie Han$^{\S}$, and Guanghui Wang}

\thanks{Key words and phrases. matching, hypergraph, absorbing method}
\thanks{The research of the first author was supported in part by National Natural Science Foundation of China (12071260).
The research of the second author was supported in part by Science Found of Qinghai Province (2021-ZJ-703).
The research of the fourth author was supported in part by Natural Science Foundation of China (11631014) and Shandong University Multidisciplinary Research and Innovation Team of Young Scholars.}
\thanks{$^{\S}$Corresponding author.}
\thanks{\emph{E-mail address:} ylchang93@163.com (Y. Chang), gehuifen11856@163.com (H. Ge), hanjie@bit.edu.cn (J. Han), ghwang@sdu.edu.cn (G. Wang)}

\address{Y. Chang, Data Science Institute, Shandong University, Jinan, 250100, China}
\address{H. Ge, School of Computer, Qinghai Normal University, Xining, 810001, China}
\address{J. Han, School of Mathematics and Statistics, Beijing Institute of Technology, Beijing, 100000, China}
\address{G. Wang, School of Mathematics and Data Science Institute, Shandong University, Jinan, 250100, China}

\begin{abstract}
For all integers $k,d$ such that $k \geq 3$ and $k/2\leq d \leq k-1$, let $n$ be a sufficiently large integer {\rm(}which may not be divisible by $k${\rm)} and let $s\le \lfloor n/k\rfloor-1$.
We show that if $H$ is a $k$-uniform hypergraph on $n$ vertices with $\delta_{d}(H)>\binom{n-d}{k-d}-\binom{n-d-s+1}{k-d}$, then $H$ contains a matching of size $s$.
This improves a recent result of Lu, Yu, and Yuan and also answers a question of K\"uhn, Osthus, and Townsend.
In many cases, our result can be strengthened to $s\leq \lfloor n/k\rfloor$, which then covers the entire possible range of $s$.
On the other hand, there are examples showing that the result does not hold for certain $n, k, d$ and $s= \lfloor n/k\rfloor$.
\end{abstract}

\date{\today}

\maketitle

\section{Introduction}
Given $k\ge 2$, a \emph{$k$-uniform hypergraph} (for short, \emph{$k$-graph}) $H$ consists of a vertex set $V(H)$ and an edge set $E(H)\subseteq \binom{V(H)}{k}$, where every edge is a $k$-subset of $V(H)$.
A \emph{matching} (or \emph{integer matching}) in $H$ is a collection of vertex-disjoint edges of $H$.
A \emph{perfect matching} in $H$ is a matching that covers all vertices of $H$.
Let $|V(H)|=n$. Clearly, a perfect matching in $H$ exists only if $k$ divides $n$.
When $k$ does not divide $n$, we call a matching $M$ in $H$ a \emph{near perfect matching} if $|M|=\lfloor n/k \rfloor$.

Let $H$ be a $k$-graph.
For a $d$-subset $S$ of $V(H)$, where $1 \leq d\leq k-1$, we define $\deg_H (S)$ to be the number of edges in $H$ containing $S$.
The \emph{minimum $d$-degree $\delta _{d}(H)$} of $H$ is the minimum of $\deg_H (S)$ over all $d$-subsets $S$ of $V(H)$.
We refer to $\delta _{1}(H)$ as the \emph{minimum vertex degree} of $H$ and $\delta _{k-1}(H)$ as the \emph{minimum codegree} of $H$.

The study of perfect matchings is one of the fundamental problems in combinatorics.
In the case of graphs, that is, $k=2$, a theorem of Tutte~\cite{Tu47} gives necessary and sufficient conditions for $H$ to contain a perfect matching, and Edmonds' Algorithm \cite{Edmonds} finds such a matching in polynomial time.
However, for the case $k\ge 3$, the decision problem whether a $k$-graph contains a perfect matching is famously NP-complete (see~\cite{garey,karp}).

\subsection{Perfect matchings}
The following conjecture from~\cite{HPS, KuOs-survey} gives a minimum $d$-degree condition that ensures a perfect matching in a $k$-graph.

\begin{conjecture}\label{conj1}
Let $k,d\in \mathbb N$ and $1\le d\le k-1$.
Then there is an $n_0 \in \mathbb N$ such that the following holds for all $n\ge n_0$.
Suppose $H$ is a $k$-graph on $n\in k\mathbb N$ vertices with
\[
\delta _d (H) \geq \left ( \max \left \{ \frac{1}{2} , 1-\left (1-\frac{1}{k}\right)^{k-d} \right\}+o(1)\right) \binom{n-d}{k-d},
\]
then $H$ contains a perfect matching.
\end{conjecture}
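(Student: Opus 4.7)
The plan is to apply the \emph{absorbing method} of R\"odl, Ruci\'nski, and Szemer\'edi, which reduces perfect matching to (i) an almost‑perfect matching result and (ii) a small absorbing structure that can swallow the leftover vertices. The claimed threshold $\max\{1/2,\,1-(1-1/k)^{k-d}\}$ is dictated by two tight constructions any proof must dominate: the \emph{divisibility barrier} (partition $V(H)=A\cup B$ with $|A|$ odd and $\approx n/2$, and take all $k$-sets meeting $A$ in an odd number of vertices, giving $\delta_d\approx\tfrac12\binom{n-d}{k-d}$), and the \emph{space barrier} (an independent set $I$ of size $\lceil n/k\rceil-1$, giving $\delta_d\approx(1-(1-1/k)^{k-d})\binom{n-d}{k-d}$). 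Neither has a perfect matching, so the bound is tight.

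The first step would be an \textbf{Absorbing Lemma}: construct a matching $M_{\mathrm{abs}}$ with $|V(M_{\mathrm{abs}})|=o(n)$ such that for every $k$-divisible $U\subseteq V(H)\setminus V(M_{\mathrm{abs}})$ with $|U|\le \beta n$, the induced hypergraph $H[V(M_{\mathrm{abs}})\cup U]$ has a perfect matching. I would prove this by showing that every $k$-tuple $T\subseteq V(H)$ admits $\Omega(n^{c_k})$ constant-size \emph{absorbers} (small configurations that swap $T$ into an existing matching), then selecting $M_{\mathrm{abs}}$ at random so the universal absorbing property holds with positive probability by concentration and a union bound. The second step is an \textbf{Almost Perfect Matching Lemma}: under the same degree hypothesis (with $o(1)$ slack), $H$ has a matching covering all but $o(n)$ vertices. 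The standard tool is the weak hypergraph regularity lemma together with a fractional matching argument in the reduced hypergraph, whose degree condition is inherited from $H$; the existence of a fractional perfect matching in the reduced hypergraph is then guaranteed by the barrier analysis. Combining the two, I would first apply the Absorbing Lemma, then apply the Almost Perfect Matching Lemma to $H\setminus V(M_{\mathrm{abs}})$ to produce $M'$ leaving a $k$-divisible leftover $U$ of size $o(n)$, and finally let $M_{\mathrm{abs}}$ absorb $U$.

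The main obstacle I expect is the Absorbing Lemma, because near the threshold the degree hypothesis is only just enough to produce absorbers for \emph{every} $k$-tuple $T$ — in particular for tuples lying close to the extremal configurations. The standard workaround is a \emph{stability} dichotomy: either $H$ is structurally close to one of the two extremal hypergraphs, in which case one constructs a perfect matching by hand using the barrier structure (rebalancing parity in the divisibility case, or saturating $I$ first in the space case), or $H$ is $\gamma$-far from both configurations and a stronger counting argument yields enough absorbers uniformly. Handling both regimes simultaneously, and in particular the crossover value $d^\ast$ at which $1/2=1-(1-1/k)^{k-d^\ast}$ and the two stability analyses must be combined, is the most delicate technical point; establishing a clean sub-extremal counting bound that is simultaneously tight against $H_1$ and $H_2$ is where I would expect most of the work of a full proof to concentrate.
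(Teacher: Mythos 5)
The statement you were asked to prove is labeled \emph{Conjecture}~1.1 in the paper, and the paper does not prove it; it is quoted from~\cite{HPS, KuOs-survey} as an open conjecture, and the paper explicitly records that it is currently known only for $0.375k\le d\le k-1$, for $1\le k-d\le 4$, and for $(k,d)\in\{(12,5),(17,7)\}$. So there is no ``paper's own proof'' to compare against, and any proposal that claims to establish the conjecture in full generality must be wrong or incomplete.

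Your proposal is a reasonable roadmap of the standard absorbing-method strategy, and it correctly identifies both extremal constructions (the divisibility barrier and the space barrier) and the need for a stability dichotomy. However, both pillars of your plan are themselves open at the conjectured threshold for small $d$. For your ``Almost Perfect Matching Lemma'' you would apply the weak regularity lemma and a fractional matching argument in the reduced hypergraph; the fractional matching threshold $c^*_{k,d}$ that controls this step is \emph{conjectured} by Alon, Frankl, Huang, R\"odl, Ruci\'nski, and Sudakov to equal $1-(1-1/k)^{k-d}$, but this is proved only for $d\ge 0.4k$ and for $k-d\le 4$ (see the discussion preceding Theorem~\ref{thm:asym} and~\cite{AFHRRS,KOTo,FK}). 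Thus your Step~2 already presupposes an open problem when $d$ is small relative to $k$. Likewise, producing $\Omega(n^{c_k})$ absorbers for \emph{every} $k$-tuple at exactly the conjectured degree is not achievable by the plain counting argument when $d<k/2$; the lattice-based refinements used in this paper (Lemma~\ref{lem:abs1}) and elsewhere require $\delta_d(H)\ge(1/4+\gamma)\binom{n-d}{k-d}$, and even these do not cover the whole conjectured range. So while the outline you give matches the approach that has succeeded in the known partial cases, it does not constitute a proof of the conjecture, and you should state explicitly that the statement remains open rather than presenting a plan as if it closes the argument.
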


There are two types of extremal examples, namely, the so-called \emph{divisibility barrier} and \emph{space barrier} which show, if true, the minimum degree conditions in Conjecture~\ref{conj1} are asymptotically best possible.

\begin{construction}[\cite{TrZh12}, Divisibility Barrier]\label{const1}
Fix integers $j,k,n$ such that $j \in \{0,1\}$ and $n\in k\mathbb N$.
Let $V$ be a set of size $n$ with a partition $U\cup W$ such that $|U| \not\equiv jn/k \bmod 2$.
Let $H^j$ be the $k$-graph on $V$ whose edges are $k$-sets $e$ such that $|e \cap U|\equiv j \bmod 2$.
\end{construction}

To see why there is no perfect matching in Construction~\ref{const1}, it is not hard to see that $\delta _{d} (H^j)\le(1/2+o(1)) \binom{n-d}{k-d}$ and the equality is attained when $|U|\approx |V| \approx n/2$.
For the case $j=0$, note that any matching in $H^0$ covers an even number of vertices in $U$.
Then, due to the parity of $|U|$, $H^0$ does not contain a perfect matching.
For the case $j=1$, suppose that there exists a perfect matching $M$ in $H^1$.
Note that $M$ has $n/k$ edges, and each edge $e$ in $M$ satisfies $|e\cap U|\equiv 1 \bmod 2$.
Summing over all edges in $M$, we obtain that $|U| \equiv n/k \bmod 2$, contradicting to our assumption on $|U|$.
So there cannot exist a perfect matching in $H^1$.
Moreover, it is known that one can construct such divisibility barriers with any finite number of parts (but with smaller minimum $d$-degrees).

\begin{construction}[Space Barrier]\label{const2}
Fix integers $k,n$ such that $n\in k\mathbb N$.
Let $V$ be a vertex set of size $n$ with a partition $U \cup W$, and $|W|=s<n/k$.
Let $H_k^k(U, W)$ be the $k$-graph on $V$ whose edges are all $k$-sets that intersect $W$.
\end{construction}
Note that any matching in Construction~\ref{const2} has at most $s<n/k$ edges, so there cannot exist a perfect matching in $H_k^k(U, W)$.
For $1\le d \le k-1$, it is easy to see that $\delta _d (H_k^k(U, W))=\binom{n-d}{k-d}-\binom{n-d-s}{k-d}=\left(1-\left(1-s/n\right)^{k-d} +o(1)\right)\binom{n-d}{k-d}$.
Moreover, the maximum value of $\delta _d (H_k^k(U, W))$ is attained by $s =\lceil n/k\rceil- 1$, which gives the second term in Conjecture~\ref{conj1}.

Conjecture~\ref{conj1} has attracted a great deal of attention in recent years, see results, e.g.,~\cite{KO06mat, Pik, RRS06mat}.
It has been confirmed for all $0.375k \leq d \leq k-1$ and for $1 \leq k-d \leq 4$ and for $(k,d)\in \{(12,5),(17,7)\}$.
In particular, R\"odl, Ruci\'nski, and Szemer\'edi~\cite{RRS09} determined that the minimum codegree threshold is $n/2-k+C$ for sufficiently large $n \in k \mathbb{N}$, where $C \in \{3/2,2,5/2,3\}$ depends on the values of $n$ and $k$.
Furthermore, the exact thresholds for sufficiently large $n$ are obtained for these cases except $(k,d)\in \{(5,1),(6,2)\}$ in \cite{CzKa, FK, JieNote, Khan1, Khan2, KOT, MaRu, TrZh12, TrZh13, TrZh15}.
For more results we refer the reader to the excellent surveys \cite{RR, zsurvey}.

\subsection{Matchings of other sizes}
When $k\nmid n$, R\"odl, Ruci\'nski, and Szemer\'edi~\cite{RRS09} showed that a minimum codegree roughly $n/k$ guarantees the existence of a matching of size $\lfloor n/k\rfloor$.
This stands as a steep contrast to the threshold for perfect matchings, which is roughly $n/2$.
Since then the study of near perfect matchings and matchings of general size $s<n/k$ has also attracted a lot of attention.
Indeed, R\"odl, Ruci\'nski, and Szemer\'edi~\cite{RRS09} determined the codegree threshold for the existence of a matching of size $s$ in a $k$-graph $H$ for all $s \leq \lfloor n/k \rfloor-k+2$.
Han~\cite{JieNear} extended this result to all $s<n/k$, verifying a conjecture of R\"odl, Ruci\'nski, and Szemer\'edi.
Moreover, Han \cite{Han15_mat} gave a divisibility barrier construction that prevents the existence of near perfect matchings in $H$ which generalizes Construction~\ref{const1}.
He also proposed a conjecture on the minimum $d$-degree threshold forcing a (near) perfect matching in $H$ which generalizes Conjecture~\ref{conj1}.
In the same paper, He determined the minimum $(k-2)$-degree threshold and gave an upper bound and lower bound for general $d$-degree threshold.
K\"uhn, Osthus, and Treglown \cite{KOT} determined the vertex degree threshold for the existence of a matching of size $s$ in a $3$-graph $H$ for all $s \leq n/3$, which can be seen as a strengthening of an old result of Bollob{\'a}s, Daykin, and Erd\H{o}s \cite{BDE76} for 3-graphs.

K\"uhn, Osthus, and Townsend \cite{KOTo} determined the $d$-degree threshold for the existence of a matching of size $s$ in a $k$-graph $H$ asymptotically for $1 \leq d \leq k-2$ and $s \leq \min\{n/(2k-2d), (1-o(1))n/k\}$ (when $k/2<d\le k-2$, this is equivalent to $s\le (1-o(1))n/k$).
They asked whether the $(1-o(1))n/k$ can be replaced by $n/k-C$ for some constant $C$ depending only on $d$ and $k$.
A recent result of Lu, Yu, and Yuan \cite{Lu} shows that one can take
\[
C=(1-d/k)\left\lceil\frac{k-d}{2d-k}\right\rceil,
\]
which answers the aforementioned question.
However, the term $(1-d/k)\lceil\frac{k-d}{2d-k}\rceil$ can be arbitrarily large when $d$ is close to $k/2$.
In this paper we reduce this gap by showing that one can take $C=1$ (see Corollary~\ref{coro}), and in many cases, one can actually take $C=0$.

In fact, Lu, Yu, and Yuan \cite{Lu} determined the \emph{exact} minimum $d$-degree threshold for the existence of a matching of size $s$ in a $k$-graph $H$ for $k/2<d\le k-1$ and $n/k-o(n)\le s\le n/k-(1-d/k)\lceil\frac{k-d}{2d-k}\rceil$.
Our main improvements can be summarized in the following two results.
The first one says that the largest $s$ we can take is either $\lfloor n/k\rfloor$ (clearly the best possible) or $\lfloor n/k\rfloor-1$ (best possible for certain values of $k$, $d$ and $n$).

\begin{theorem}\label{main}
For all integers $k,d$ such that $k \geq 3$ and $k/2\leq d \leq k-1$, there exists an $n_0 \in \mathbb N$ such that the following holds for $n\ge n_0$.
Let $n\equiv r \bmod k$ be an integer, where $0\le r\le k-1$.
Suppose $H$ is an $n$-vertex $k$-graph with $\delta_{d}(H) > \binom{n-d}{k-d}-\binom{n-d-s+1}{k-d}$, where
\begin{displaymath}
s = \left\{ \begin{array}{ll}
\lfloor n/k\rfloor, & \textrm{if $k/2\le d<\lceil 2k/3\rceil$ and $r\ge 2$, or if $\lceil 2k/3\rceil\le d\le k-1$ and} \\ &\textrm{$r\ge k-d$};\\
\lfloor n/k\rfloor-1, & \textrm{otherwise}.
\end{array} \right.
\end{displaymath}
Then $H$ contains a matching of size $s$.
In particular, $H$ contains a matching covering all but at most $2k-d-1$ vertices.
\end{theorem}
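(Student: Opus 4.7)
The proof will follow the absorbing method together with an extremal/non-extremal dichotomy. The architecture is: (i) construct a small absorbing matching $M_a$ that can swallow any small leftover set $T\subseteq V(H)\setminus V(M_a)$ of admissible size; (ii) find a near-perfect matching on $V(H)\setminus V(M_a)$, using the fact that our hypothesis comfortably exceeds the known threshold for almost-perfect matchings; (iii) combine the two into a matching of precisely the size~$s$ stated. The improvement over Lu--Yu--Yuan~\cite{Lu} and the resolution of the question of K\"uhn--Osthus--Townsend~\cite{KOTo} must come from refining the absorption so that it handles leftovers of every residue class modulo~$k$, rather than only a single fixed one.

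For the absorption step, I would prove an absorbing lemma of the following shape: under the degree hypothesis $\delta_d(H) > \binom{n-d}{k-d}-\binom{n-d-s+1}{k-d}$, every admissible small set $T$ has polynomially many (in $n$) \emph{absorbers}---short sub-matchings that can swap in $T$. A standard random-selection-plus-alteration argument then produces a short matching $M_a$ enjoying the absorption property for every eligible~$T$ simultaneously. The intermediate near-matching on $V(H)\setminus V(M_a)$ is supplied by the main theorem of~\cite{KOTo,Lu}, which already gives a matching covering all but $o(n)$ vertices under a weaker degree assumption than ours. The hard part of the absorption is arranging the combinatorial ``flexibility'' of $M_a$ so that different residue classes of $|T| \bmod k$ can all be absorbed.

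The crux is then the case analysis needed to reach the exact value of~$s$. I would split along the usual stability dichotomy. If $H$ is far (in edit distance) from both the space barrier of Construction~\ref{const2} and any generalized divisibility barrier built on Construction~\ref{const1}, then absorbers for leftovers of every residue exist in abundance, and one reaches size $\lfloor n/k\rfloor$ unconditionally. If $H$ is close to a space barrier $H_k^k(U,W)$ with $|W|\le s-1$, the strict inequality in the degree hypothesis forces more edges hitting outside $W$ than the barrier permits, and an edge-swap argument yields a matching of size~$s$. If $H$ is close to a divisibility barrier with parts $U_0,\dots,U_{t-1}$, one tracks $|U_i|\bmod k$ and verifies that the residue constraints on $r$ in the statement---$r\ge 2$ when $d<\lceil 2k/3\rceil$, and $r\ge k-d$ when $d\ge \lceil 2k/3\rceil$---are exactly those needed to destroy the obstruction; the split at $d=\lceil 2k/3\rceil$ arises because the maximum number of parts such a barrier can support within our degree bound changes at this threshold. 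The main obstacle will therefore be the divisibility case for $d$ close to $k/2$, where multi-part barriers impose several simultaneous residue conditions to be reconciled with~$r$. The ``in particular'' clause is then immediate in each branch: for $s=\lfloor n/k\rfloor$ one has $n-ks = r \le k-1 \le 2k-d-1$, and in the ``otherwise'' branches the stated constraints on $r$ force $n-ks = r+k \le 2k-d-1$, so a matching missing at most $2k-d-1$ vertices exists uniformly.
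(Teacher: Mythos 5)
Your outline matches the paper's skeleton---absorbing method plus an extremal/non-extremal dichotomy, with the extremal case delegated to Lemma~\ref{Lulemma2.3} of~\cite{Lu} and the non-extremal case built from an absorbing matching plus the almost-perfect matching of Lemma~\ref{lem:almost}---but it misses the paper's principal new ingredient and asserts claims the method does not deliver. The decisive gap is in the absorption. The random-selection-plus-alteration absorber you describe (in the style of R\"odl--Ruci\'nski--Szemer\'edi) relies on the $S$-absorbing configuration of an edge $e$ together with $e_1,e_2$ partitioning $S$ and meeting $e$ in $\lfloor d/2\rfloor$ and $\lceil d/2\rceil$ vertices; the degree bounds $\delta_{k-\lfloor d/2\rfloor}$, $\delta_{k-\lceil d/2\rceil}$ needed to count such configurations only follow from $\delta_d$ when $\lceil 2k/3\rceil\le d\le k-1$. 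For $k/2\le d<\lceil 2k/3\rceil$ this construction is unavailable, and the paper instead exploits that the hypothesis forces $\delta_d(H)\ge(1/4+o(1))\binom{n-d}{k-d}$ in this range, which opens the door to the lattice-based absorber of Lemma~\ref{lem:abs1}, built from Lemmas~\ref{partition}, \ref{transferral}, \ref{absorbing1} and the new Proposition~\ref{prop:absorb} (in a transferral-free $\mu$-robust lattice on $\le 3$ parts, every $(k+2)$-set $U$ admits $i,j$ with $\bfi_{\cP}(U)-\bfu_i-\bfu_j\in L_{\cP}^\mu(H)$). Your proposal offers no substitute for this and cannot produce absorbers for the small-$d$ range.

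Second, your trichotomy overstates what the method gives. The paper's non-extremal case does not single out divisibility barriers; the lattice absorber handles them implicitly, and in \emph{every} non-extremal instance it yields a matching missing at most $k+1$ (resp.~$2k-d-1$) vertices---never down to $r$. Whether one then obtains $\lfloor n/k\rfloor$ or $\lfloor n/k\rfloor-1$ is decided by the congruence $m\equiv r\bmod k$ together with that upper bound on the leftover $m$, not by distance from barriers. Your claim that being far from both barriers yields $\lfloor n/k\rfloor$ ``unconditionally'' is therefore not produced by this kind of argument, and the paper's remark exhibits a construction (Proposition~1.11 of~\cite{Han15_mat}) showing $s=\lfloor n/k\rfloor$ is impossible for $0.59k<d\le k-1$ and $r=1$, which your first branch would wrongly cover. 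The split at $d=\lceil 2k/3\rceil$ is also misattributed: it marks where the R\"odl--Ruci\'nski--Szemer\'edi absorber's degree requirements cease to follow from $\delta_d(H)$, not where a divisibility barrier's part count changes. Your final arithmetic for the ``in particular'' clause ($n-ks=r$ or $r+k\le 2k-d-1$) is correct.
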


\noindent\textbf{Remark.} The theorem also holds for $s=\lfloor n/k\rfloor$ in the case $k/2\le d\le 0.59k$ and $r=1$ by a result of Han~\cite{Han15_mat}, which we did not include above.
On the other hand, one can not hope for $s=\lfloor n/k\rfloor$ in the case $0.59k < d \le k-1$ and $r=1$ due to a construction~\cite[Proposition 1.11]{Han15_mat}.
However, for other cases it is not clear whether the theorem holds for $s=\lfloor n/k\rfloor$.

The second result is on matchings of any given size $s$ in $H$ for all $s\le \lfloor n/k\rfloor-1$, which can be easily deduced from Theorem~\ref{main}.
In particular, it says that one can take $C=1$ in the aforementioned question of K\"uhn, Osthus, and Townsend \cite{KOTo}.

\begin{corollary}\label{coro}
For all integers $k,d$ such that $k \geq 3$ and $k/2\leq d \leq k-1$, there exists an $n_0 \in \mathbb N$ such that the following holds for $n\ge n_0$.
Let $n$ be an integer {\rm(}which may not be divisible
by $k${\rm)} and let $s$ be an integer such that $s\le \lfloor n/k\rfloor-1$.
Suppose $H$ is an $n$-vertex $k$-graph with $\delta_{d}(H) > \binom{n-d}{k-d}-\binom{n-d-s+1}{k-d}$.
Then $H$ contains a matching of size $s$.
\end{corollary}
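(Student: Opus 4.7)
The plan is to deduce Corollary~\ref{coro} from Theorem~\ref{main} via an \emph{add-vertex reduction}. Given $H$ on $n$ vertices satisfying the hypothesis, I will build an enlargement $H^*$ of $H$ on $n^*:=n+t$ vertices (for a carefully chosen non-negative integer $t$), apply Theorem~\ref{main} to $H^*$ with parameter $s^*:=s+t$, and extract a matching of size $s$ in $H$ from the resulting matching in $H^*$.

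To construct $H^*$, let $X$ be a set of $t$ new vertices disjoint from $V(H)$, set $V^*=V(H)\cup X$, and let the edges of $H^*$ be the edges of $H$ together with every $k$-subset of $V^*$ meeting $X$. For any $d$-set $S\subseteq V^*$ one checks $\deg_{H^*}(S)\ge\delta_d(H)+\binom{n^*-d}{k-d}-\binom{n-d}{k-d}$. Plugging in the Corollary's hypothesis, and noting that $s^*=s+t$ gives $n^*-s^*+1=n-s+1$, this yields
\[
\delta_d(H^*)>\binom{n^*-d}{k-d}-\binom{n^*-d-s^*+1}{k-d},
\]
which is precisely the hypothesis of Theorem~\ref{main} for $H^*$ with parameter $s^*$.

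The main obstacle is choosing $t\ge 0$ so that Theorem~\ref{main}'s case distinction actually produces $s^*=s+t$ on $H^*$. Writing $m:=n-ks$ (so $m\ge k$, since $s\le\lfloor n/k\rfloor-1$), the admissible regimes are: (i) $s+t=\lfloor n^*/k\rfloor$ with $r^*:=n^*\bmod k$ satisfying $r^*\ge 2$ (if $d<\lceil 2k/3\rceil$) or $r^*\ge k-d$ (if $d\ge\lceil 2k/3\rceil$), which forces $r^*=m-(k-1)t$; or (ii) $s+t=\lfloor n^*/k\rfloor-1$ with $r^*$ in the complementary range, which forces $r^*=m-(k-1)t-k$. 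A short residue analysis modulo $k-1$ shows that the admissible residues of $r^*$ from regime (i), together with those from regime (ii) (the latter shifted by $k\equiv 1\pmod{k-1}$), exhaust $\mathbb Z_{k-1}$; hence some non-negative integer $t$ is always available. In particular the boundary case $m=k$ (equivalently $s=\lfloor n/k\rfloor-1$ and $k\mid n$) falls into regime (ii) with $t=0$, corresponding to a direct application of Theorem~\ref{main} to $H$ itself.

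Once $t$ is fixed, Theorem~\ref{main} yields a matching $M^*$ of size $s+t$ in $H^*$. Since the edges of $M^*$ are pairwise disjoint and $|X|=t$, at most $t$ of them meet $X$, so the remaining $\ge s$ edges lie entirely in $V(H)$ and form the desired matching of size $s$ in $H$.
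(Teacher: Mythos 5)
Your reduction is the same one the paper uses: augment $H$ by $t$ fresh ``universal'' vertices to get $H^*$ on $n^*=n+t$ vertices, verify the degree condition for $s^*=s+t$ (the key identity being $n^*-s^*=n-s$), apply Theorem~\ref{main} to $H^*$, and discard the at most $t$ edges of the resulting matching that meet the new vertices. Your degree computation and the final deletion step are correct.

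The one place where you deviate from the paper is in the choice of $t$. The paper simply picks $t$ so that $s+t=\lfloor n^*/k\rfloor-1$ (which exists by trying $t=0,1,2,\dots$), and applies Theorem~\ref{main} with that $s^*$. Strictly speaking, the literal statement of Theorem~\ref{main} assigns $s=\lfloor n^*/k\rfloor$ in certain residue classes $r^*$, so the paper is implicitly using that the $\lfloor n^*/k\rfloor-1$ branch of the theorem (with the correspondingly weaker degree condition) actually holds for \emph{all} residues $r^*$ --- which is true, since both Lemma~\ref{Lulemma2.3} and Lemma~\ref{lem:almost} are stated for $s\in\{\lfloor n/k\rfloor-1,\lfloor n/k\rfloor\}$ and the absorbing step produces a matching of size at least $\lfloor n^*/k\rfloor-1$ in every case, but this reading is not spelled out. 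You instead tune $t$ so that $s^*=s+t$ lands exactly on the value produced by Theorem~\ref{main}'s case distinction, by a residue analysis mod $k-1$ (splitting into the two regimes $s^*=\lfloor n^*/k\rfloor$ or $\lfloor n^*/k\rfloor-1$ and showing one always has an admissible $t\ge 0$). I checked this analysis and it is correct: in regime (i) one has $r^*\equiv m\pmod{k-1}$ and in regime (ii) $r^*\equiv m-1\pmod{k-1}$, and in each of the cases $d<\lceil 2k/3\rceil$ and $d\ge\lceil 2k/3\rceil$ the two sets of admissible residues cover all of $\{0,\dots,k-2\}$, with the corresponding $t$ nonnegative (and $t\ge 1$ in regime (i), as needed). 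So your argument is a valid and arguably more self-contained deduction from the literal statement of Theorem~\ref{main}; the trade-off is an extra layer of case analysis compared with the paper's one-line choice of $t$.

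One expository nit: the phrase ``exhaust $\mathbb{Z}_{k-1}$'' is a little vague --- what you really need (and what your computation gives) is that for every residue of $m\bmod(k-1)$ at least one of the two regimes yields an admissible $r^*$ with a nonnegative $t$. It would strengthen the write-up to state the two regime criteria explicitly (regime (i) works iff $m\bmod(k-1)\in\{0\}\cup\{2,\dots,k-2\}$, resp.\ $\{0\}\cup\{k-d,\dots,k-2\}$; regime (ii) works iff $m\bmod(k-1)\in\{1,2\}$, resp.\ $\{1,\dots,k-d\}$) and observe that the union is all of $\{0,\dots,k-2\}$.
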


\begin{proof}
For $s\le \lfloor n/k\rfloor-1$, let $t$ be an integer such that $t=\lfloor (n+t)/k\rfloor-s-1$.\footnote{Note that one can choose such a $t$ by trying $t=0,1,2,\dots$}
Then $s+t=\lfloor (n+t)/k\rfloor-1$.
Consider the auxiliary $k$-graph $H'$ by adding $t$ vertices to $H$ such that $H'$ contains all edges of $H$ and all $k$-sets containing any of these new vertices.
Note that $H'$ has $n+t$ vertices and
\begin{align}
\delta_d(H')&=\delta_d(H)+\binom{n+t-d}{k-d}-\binom{n-d}{k-d} \nonumber \\
 &>\binom{n-d}{k-d}-\binom{n-d-s+1}{k-d}+\binom{n+t-d}{k-d}-\binom{n-d}{k-d}\nonumber \\
 &=\binom{(n+t)-d}{k-d}-\binom{(n+t)-d-(s+t-1)}{k-d}.\nonumber
\end{align}
Thus, we apply Theorem~\ref{main} on $H'$ and conclude that $H'$ contains a matching $M'$ of size $s+t=\lfloor (n+t)/k\rfloor-1$.
By deleting at most $t$ edges from $M'$ that contain the new vertices, we can get a matching in $H$ of size $s$ and we are done.
\end{proof}

Combining Theorem~\ref{main} and Corollary~\ref{coro}, we obtain the exact minimum $d$-degree threshold for a matching of size $s$ for all $s\le n/k$ if $k/2\le d\le 0.59k$ and $r=1$, or if $k/2\le d<\lceil 2k/3\rceil$ and $2\le r\le k-1$, or if $\lceil 2k/3\rceil\le d\le k-1$ and $k-d\le r\le k-1$.

At last, we remark that if we only target on the~\emph{asymptotical} minimum $d$-degree thresholds the problem would be much easier.
To formulate such a result, let us introduce the notion of fractional matchings.
Given a $k$-graph $H=(V, E)$, a \emph{fractional matching} in $H$ is a function $\omega: E \to [0,1]$ such that for each $v\in V(H)$ we have that $\sum_{e \ni v} w(e)\leq1$.
Then $\sum _{e \in E(H)} w(e)$ is the \emph{size} of $w$.
If the size of $w$ in $H$ is $n/k$ then we say that $w$ is a \emph{perfect fractional matching}.
Given $k,d \in \mathbb N$ such that $d \leq k-1$, define $c^*_{k,d}$ to be the smallest number $c$ such that every $k$-graph $H$ on $n$ vertices with $\delta _{d} (H) \geq (c +o(1)) \binom{n-d}{k-d}$ contains a perfect fractional matching.
Alon et al.~\cite{AFHRRS} conjectured that $c^*_{k,d}=1-(1-1/k)^{k-d}$ for all $d, k \in \mathbb N$ and verified the case $k-d\le 4$.
So far the conjecture was verified when $d\ge 0.4k$ and when $k-d\le 4$~\cite{KOTo, FK}.

\begin{theorem}\label{thm:asym}
Let $k, d$ be integers such that $1\le d\le k-1$ and $\gamma >0$, then there exists an $n_0\in\mathbb{N}$ such that the following holds for $n\ge n_0$.
Suppose $H$ is an $n$-vertex $k$-graph with $\delta_d(H)\ge (c_{k,d}^*+\gamma)\binom{n-d}{k-d}$, then $H$ contains a matching $M$ that covers all but at most $2k-d-1$ vertices.
In particular, when $n\in k\mathbb N$, $M$ is a perfect matching or covers all but exactly $k$ vertices.
\end{theorem}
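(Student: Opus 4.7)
The proof plan follows the \emph{absorbing method} of R\"odl, Ruci\'nski, and Szemer\'edi, now standard in such $d$-degree hypergraph matching problems. The argument proceeds in three stages.

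\textbf{Stage 1 (Absorbing matching).} The key step is to build a small \emph{absorbing matching} $M_A$ of size at most $\mu^2 n$, for a suitable constant $\mu = \mu(\gamma, k) > 0$, with the property that for every subset $U \subseteq V(H) \setminus V(M_A)$ with $|U| \le \mu^3 n$ and $k$ dividing $|U|$, the induced hypergraph $H[V(M_A) \cup U]$ contains a perfect matching. The construction is probabilistic: since $\delta_d(H) \ge (c^*_{k,d} + \gamma) \binom{n-d}{k-d} \ge \gamma \binom{n-d}{k-d}$, every $d$-set lies in a positive fraction of all $(k-d)$-subsets, which in turn gives each $k$-set $Q$ polynomially many bounded-size ``absorbers'' -- matchings $M_Q$ such that both $H[V(M_Q)]$ and $H[V(M_Q) \cup Q]$ have perfect matchings, the latter using one more edge. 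A random selection of these absorbers, followed by a cleanup step to delete intersections, yields $M_A$.

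\textbf{Stages 2 and 3 (Near-perfect matching and absorption).} Set $H' = H - V(M_A)$ and $n' = |V(H')|$. Since $|V(M_A)| = o(n)$, a direct calculation shows $\delta_d(H') \ge (c^*_{k,d} + \gamma/2) \binom{n' - d}{k - d}$. By the definition of $c^*_{k,d}$, $H'$ admits a perfect fractional matching, which can be rounded to an integer matching $M'$ covering all but at most $\mu^3 n$ vertices of $H'$, for instance via the weak hypergraph regularity lemma combined with the Pippenger--Spencer nibble, or via the Frankl--R\"odl rounding employed in the original R\"odl--Ruci\'nski--Szemer\'edi-type arguments. Let $U$ denote the vertices left uncovered by $M_A \cup M'$; by Stage 1 one absorbs the largest multiple of $k$ of the vertices of $U$ into $M_A$, leaving at most $k - 1 \le 2k - d - 1$ uncovered vertices. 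When $n \in k\mathbb{N}$, $|U|$ is automatically a multiple of $k$, so either the absorption is complete (yielding a perfect matching) or it leaves exactly $k$ vertices uncovered, consistent with the stated dichotomy.

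The main obstacle is Stage 1: the probabilistic construction of $M_A$ and the verification that the minimum $d$-degree hypothesis yields sufficiently many absorbers for every $k$-set. For $d$ moderately large (say $d \ge k/2$) this reduces to a routine counting argument, but for smaller $d$ the absorbers must be chosen carefully, typically via multi-step ``connecting'' in an auxiliary graph whose vertices are $(k-d)$-subsets linked by a shared neighborhood in $H$. Once the per-$k$-set absorber count is established, the probabilistic selection in Stage 1 and the remaining rounding/absorption steps are by now standard in the literature.
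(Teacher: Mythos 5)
Your Stage~1, as stated, is too strong and in fact false under the given hypothesis. You claim to build an absorbing matching $M_A$ such that $H[V(M_A)\cup U]$ has a \emph{perfect} matching for every small $U$ with $k\mid|U|$. If that were true, then for $n\in k\mathbb{N}$ the final argument would always produce a perfect matching of $H$, since $|U|$ is automatically a multiple of $k$. But $c^*_{k,d}<1/2$ for much of the range (e.g.\ $c^*_{k,k-1}=1/k$), and the divisibility barrier of Construction~\ref{const1} has minimum $d$-degree roughly $\tfrac12\binom{n-d}{k-d}$ with no perfect matching. So a degree of $(c^*_{k,d}+\gamma)\binom{n-d}{k-d}$ cannot yield the kind of ``absorb-to-perfect'' lemma you propose; your own hedge that the absorption ``either is complete\dots\ or leaves exactly $k$ vertices'' is inconsistent with the absorbing property you formally stated, which would never leave $k$ vertices behind when $k\mid|U|$. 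This is not a routine issue of ``choosing absorbers carefully''; it is an obstruction of principle, and it is exactly why the theorem's conclusion is a dichotomy (perfect or $k$ left over) rather than a perfect matching.

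The paper circumvents this with weaker, tailor-made absorbing lemmas (Lemmas~\ref{lem:abs1} and~\ref{lem:abs2}): the guarantee is only that $H[V(M)\cup R]$ has a matching covering all but at most $k+1$, resp.\ $2k-d-1$, vertices, not a perfect matching. Moreover the construction of these lemmas is itself non-standard. For $\lceil 2k/3\rceil\le d\le k-1$, Lemma~\ref{lem:abs2} uses an $S$-absorbing edge structure that deliberately leaves $k-d$ vertices of $S\cup e$ uncovered after each swap — that residue is the source of the $2k-d-1$ bound. For $k/2\le d<\lceil 2k/3\rceil$, where the degree $\ge(1/4+\gamma)\binom{n-d}{k-d}$ is still below the $1/2$ threshold, the paper falls back to Han's lattice-based absorption (Lemmas~\ref{partition}--\ref{absorbing1}, Proposition~\ref{prop:absorb}), which tracks index vectors relative to a partition and requires a careful analysis showing $\bfi_{\cP}(U)-\bfu_i-\bfu_j\in L^{\mu}_{\cP}(H)$. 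The paper also dispatches $1\le d\le k/2$ (where $c^*_{k,d}\ge1/3$) and $d=k-1$ by citing earlier results. Your proposal does not engage with any of this case structure; replacing ``perfect absorption'' with the correct weaker lemma — and actually proving that lemma for all $k/2\le d\le k-2$, which is the heart of the matter — is the missing content.
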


Theorem~\ref{thm:asym} has been proved by Han and Treglown~\cite[Theorem 7.2]{HT} under the assumption $\delta_d(H)\ge (\max\{c_{k,d}^*, 1/3\}+\gamma)\binom{n-d}{k-d}$.
Using the regularity method, under the assumption of Theorem~\ref{thm:asym}, it is easy to construct a matching that leaves $o(n)$ vertices uncovered.
Here we apply the absorbing method and thus reduce the number of uncovered vertices to at most $2k-d-1$.

As a typical approach in this area, we employ the absorbing method, popularized by R\"{o}dl, Ruci\'{n}ski, and Szemer\'{e}di~\cite{RRS06}, and distinguish the ``extremal'' and ``non-extremal'' cases, which is also the approach used by Lu, Yu, and Yuan~\cite{Lu}.
For example, the extremal case has been resolved by Lu, Yu, and Yuan~\cite{Lu}.
The main difference is in the part of absorption in the non-extremal case, where we deal with large and small values of $d$ with different strategies: we use the lattice-based absorbing method of Han~\cite{Han15_mat} when $k/2\le d< \lceil 2k/3\rceil$, and use an argument of R\"{o}dl, Ruci\'{n}ski, and Szemer\'{e}di~\cite{RRS06} when $\lceil 2k/3 \rceil \leq d \leq k-1$.
The first method was initially used in \cite{Han14_poly} for solving a complexity problem of Karpi\'{n}ski, Ruci\'{n}ski, and Szyma\'{n}ska~\cite{KRS10}.

\medskip
\noindent\textbf{Organization.} Throughout the rest of the paper, $k$ denotes an integer with $k\ge 3$.
As usual, for an integer $b$, let $[b] = \{1,\dots,b\}$.
The rest of the paper is organized as follows.
In Section~\ref{sec2}, we first give some useful lemmas and then prove Theorem~\ref{main} and Theorem~\ref{thm:asym}.
In Sections~\ref{sec3} and~\ref{sec4}, we give the proofs of our absorbing lemmas (Lemma~\ref{lem:abs1} and Lemma~\ref{lem:abs2}).

\section{Proofs of Theorems~\ref{main} and \ref{thm:asym}}\label{sec2}
We first extend the definition of $H_k^k(U,W)$ in Construction~\ref{const2} to $H_k^{\ell}(U,W)$ for all $\ell \in [k]$.
Again, let $V$ be a vertex set with a partition $U \cup W$. Let $H_{k}^{\ell}(U, W)$ be the $k$-graph on $V$ whose edges are all $k$-sets $e$ such that $1\le |e\cap W|\le \ell$ (see Figure~\ref{fig}).
Given two $k$-graphs $H_1, H_2$ and a real number $\varepsilon >0$, we say that $H_2$ is \emph{$\varepsilon$-close} to $H_1$ if $V(H_1)=V(H_2)$ and $|E(H_1) \setminus E(H_2)| \leq \varepsilon |V(H_1)|^k$.

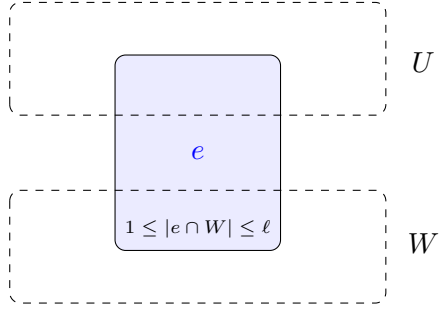
\begin{figure}[!ht]
\begin{center}
\begin{tikzpicture}
\begin{pgfonlayer}{background}    
\draw[rounded corners,,fill=blue!8] (-1.1,-0.8) rectangle (1.1, 1.8);  
\draw[rounded corners,dashed] (-2.5,1) rectangle (2.5, 2.5);  
\draw[rounded corners,dashed] (-2.5,0) rectangle (2.5, -1.5);  
\end{pgfonlayer}
\node at (3, 1.7) {$U$};
\node at (3, -0.7) {$W$};
\node at (0, 0.5) [color=blue]{$e$};
\node at (0, -0.5) {\tiny{$1\le |e\cap W|\le \ell$}};
\end{tikzpicture}
\caption{\small An illustration of the $k$-graph $H_{k}^{\ell}(U, W)$.}
\label{fig}
\end{center}
\end{figure}
We write $x\ll y\ll z$ to mean that we can choose constants from right to left, that is, there exist functions $f$ and $g$ such that, for any $z>0$, whenever $y\leq f(z)$ and $x\leq g(y)$, the subsequent statement holds.
Statements with more variables are defined similarly.
The following result by Lu, Yu, and Yuan~\cite{Lu} is needed for the ``extremal'' case.

\begin{lemma}[{\cite[Lemma 2.3]{Lu}}]\label{Lulemma2.3}
Given integers $d$ and $k$ such that $d\in [k-1]$, suppose $0<1/n\ll\varepsilon\ll 1/k$ and let $s\in\{\lfloor n/k\rfloor-1,\lfloor n/k\rfloor\}$.
Suppose $H$ is an $n$-vertex $k$-graph with $\delta_{d}(H) >\binom{n-d}{k-d}-\binom{n-d-s+1}{k-d}$, and $H$ is $\varepsilon$-close to
$H_{k}^{k-d}(U, W)$, where $|W|=s-1$ and $|U|=n-s+1$.
Then $H$ contains a matching of size $s$.
\end{lemma}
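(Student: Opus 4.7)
The plan is to build a matching of size $s$ consisting of one edge $e_0$ lying entirely inside $U$, together with $s-1$ edges $f_1,\dots,f_{s-1}$, each containing exactly one vertex of $W$ (say $w_i$) and $k-1$ vertices of $U\setminus e_0$. This structure is in fact essentially forced: any matching of size $s$ uses at most $|W|=s-1$ edges meeting $W$, so at least one edge must lie entirely inside $U$, and the remaining structure is naturally the ``canonical'' one with $s-1$ crossing edges.

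For the $U$-edge, observe that for any $d$-subset $S\subseteq U$ the number of $k$-supersets of $S$ that meet $W$ equals $\binom{n-d}{k-d}-\binom{|U|-d}{k-d}=\binom{n-d}{k-d}-\binom{n-d-s+1}{k-d}$, strictly less than $\delta_d(H)$ by hypothesis. Hence every such $S$ extends in $H$ to an edge disjoint from $W$, so $H[U]$ has minimum $d$-degree at least one, and there are many candidates for $e_0$. Next, the $\varepsilon$-closeness of $H$ to $H_k^{k-d}(U,W)$ gives a density statement for ``crossing'' edges through $W$: in $H_k^{k-d}(U,W)$, every pair $(w,T)$ with $w\in W$ and $T\in\binom{U}{k-1}$ is an edge (since $|e\cap W|=1\in[1,k-d]$), so by closeness, for each $w\in W$ all but at most $O(\varepsilon n^{k-1})$ of the $(k-1)$-subsets of $U$ form an edge with $w$ in $H$.

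Given these ingredients, I would fix a candidate $e_0$ and then greedily pick disjoint $(k-1)$-subsets $T_1,\dots,T_{s-1}$ of $U\setminus e_0$ so that $\{w_i\}\cup T_i\in E(H)$. At step $i$, the number of available $(k-1)$-subsets of $U\setminus(e_0\cup T_1\cup\dots\cup T_{i-1})$ is $\binom{n-s+1-k-(i-1)(k-1)}{k-1}$, which dwarfs the $O(\varepsilon n^{k-1})$ ``missing'' subsets at $w_i$ for every $i\le s-2$ since $\varepsilon\ll 1/k$. The main difficulty is the final step in the borderline regime $n=ks$ (when the target matching is perfect): only $k-1$ vertices of $U$ remain at that point, so exactly one candidate $T_{s-1}$ is left, leaving no room for error. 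To overcome this, I plan to either (i) run a defect Hall-type argument in advance on the bipartite-like ``link'' structure between $W$ and $\binom{U\setminus e_0}{k-1}$ to guarantee from the outset a full system of vertex-disjoint representatives, or (ii) exploit the freedom in selecting $e_0$ established in the first step: if the first choice of $e_0$ forces a bad final $T_{s-1}$, swap a vertex of $e_0$ with a vertex of $T_{s-1}$, using that every $d$-subset of $U$ extends to an edge of $H[U]$. This endgame is the key technical obstacle, and it is precisely the place where the strict inequality $\delta_d(H)>\binom{n-d}{k-d}-\binom{n-d-s+1}{k-d}$ and the slack provided by $\varepsilon$-closeness must be combined carefully.
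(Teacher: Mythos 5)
First, note that the paper does not prove this lemma itself -- it cites it verbatim as Lemma~2.3 of Lu, Yu, and Yuan~\cite{Lu}, so there is no in-paper proof to compare against; I therefore evaluate your sketch on its own terms.

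Your high-level decomposition (one edge $e_0$ inside $U$ plus $s-1$ ``crossing'' edges through $W$) and your derivation of $\delta_d(H[U])\ge 1$ from the strict degree hypothesis are both sound, and indeed are forced by the arithmetic $|W|=s-1$. However, there are two genuine gaps. The first is that $\varepsilon$-closeness is a \emph{global} bound, $|E(H_k^{k-d}(U,W))\setminus E(H)|\le \varepsilon n^k$, and does not yield the per-vertex estimate you assert (``for each $w\in W$, all but $O(\varepsilon n^{k-1})$ of the $(k-1)$-subsets of $U$ form an edge with $w$''). A single $w$ can swallow the entire error budget and have essentially none of its crossing edges present; you would need a cleaning step identifying at most $O(\sqrt\varepsilon\, n)$ ``bad'' vertices of $W$ and matching them using the degree condition, possibly with edges meeting $W$ in more than one vertex, which in turn demands more than one edge inside $U$ -- and $\delta_d(H[U])\ge 1$ does not supply disjoint such edges.

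The second gap is deeper than the ``final step'' issue you flag. The greedy argument actually collapses as soon as only $O(1)$ vertices of $U$ remain: the number of available $(k-1)$-sets is then $\binom{O(1)}{k-1}$, which is bounded, whereas even a ``good'' $w$ can have $\Theta(\sqrt\varepsilon\, n^{k-1})\gg 1$ missing crossing $(k-1)$-sets, and nothing in the closeness hypothesis prevents all the surviving candidates from being among them. The $d$-degree condition does not rescue this either, because almost all of the guaranteed extensions of a $d$-set $\{w\}\cup T'$ use vertices already covered by the partial matching. Your fallback ideas are not developed enough to close this: the vertex-swapping fix requires $H[U]$ to contain many more edges than $\delta_d(H[U])\ge1$ gives (in $H_k^{k-d}(U,W)$ there are \emph{no} edges inside $U$, and $\varepsilon$-closeness adds nothing there), and a Hall-type argument would have to be formulated for a hypergraph matching problem (choosing pairwise disjoint $(k-1)$-sets), not a bipartite graph. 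The actual proof in~\cite{Lu} handles these last $O(1)$ edges with a more careful structural argument rather than a greedy pass; as written, your sketch is a promising outline of the easy part but leaves the crux of the lemma unresolved.
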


We also need the following lemma in the ``non-extremal'' case, which guarantees that the resulting $k$-graph has an almost perfect matching after taking away an absorbing matching (from Lemma~\ref{lem:abs1} and Lemma~\ref{lem:abs2}).

\begin{lemma}[{\cite[Lemma 5.7]{Lu}}]\label{lem:almost}
Given $\varepsilon, \sigma>0$ and integers $k,d$ such that $k/2\le d\le k-1$, suppose $0<1/n\ll\rho \ll\varepsilon,1/k$ and let $s\in\{\lfloor n/k\rfloor-1,\lfloor n/k\rfloor\}$.
Suppose $H$ is an $n$-vertex $k$-graph with $\delta_{d}(H) \ge \binom{n-d}{k-d}-\binom{n-d-s+1}{k-d}-\rho n^{k-d}$, and $H$ is not $\varepsilon$-close to $H_{k}^{k-d}(U, W)$ for any partition $U, W$ of $V(H)$ with $|W|=s-1$.
Then $H$ contains a matching covering all but at most $\sigma n$ vertices.
\end{lemma}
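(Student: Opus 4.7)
The plan is a contradiction argument via a maximum matching. Let $M$ be a maximum matching in $H$ and suppose its set of uncovered vertices $L := V(H) \setminus V(M)$ has size at least $\sigma n$. The goal is to produce a partition $(U, W)$ of $V(H)$ with $|W| = s - 1$ for which $H$ is $\varepsilon$-close to $H_k^{k-d}(U, W)$, contradicting the hypothesis. Since $s \in \{\lfloor n/k \rfloor - 1, \lfloor n/k \rfloor\}$, the degree bound rewrites as
\[
\delta_d(H) \ge \bigl(1 - (1 - 1/k)^{k-d}\bigr)\binom{n-d}{k-d} - O(\rho)\, n^{k-d},
\]
which matches the extremal degree of the space barrier up to $O(\rho) n^{k-d}$ slack.

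First I would extract from the maximality of $M$ the fact that no edge of $H$ lies inside $L$ and, more strongly, that no ``augmenting configuration'' exists, which (via the R\"odl--Ruci\'nski--Szemer\'edi augmenting-path framework) forces, for a typical $d$-set $S \subseteq L$, that its link $N_H(S)$ avoids a fixed rigid set of size $\approx (1 - 1/k)(n-d)$. Since the $O(\rho) n^{k-d}$ slack in the degree bound is much smaller than the available room, one can deduce the existence of a common ``forbidden'' set $W^* \subseteq V(M)$ of size $\approx n/k$ such that almost every edge through a $d$-set in $L$ meets $W^*$ in at most $k - d$ vertices.

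Next, I would adjust $W^*$ by at most $O(\sigma n + \rho n)$ vertices to achieve $|W| = s - 1$ and then propagate the $(k-d)$-bounded intersection condition from edges through $d$-sets in $L$ to essentially all edges of $H$, via a standard averaging argument using the minimum $d$-degree condition once more. This realizes $H$ as $\varepsilon$-close to $H_k^{k-d}(V(H) \setminus W, W)$, which is the desired contradiction.

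The main obstacle is pinning down the common set $W^*$: the slack $\rho n^{k-d}$ is only marginally below the extremal deficit, so the identification must be robust against a small number of outlier $d$-sets in $L$. The natural technical tool is a stability version of an Erd\H{o}s--Ko--Rado-type statement applied to the link $(k-d)$-graph of a typical $d$-set $S \subseteq L$, and the analysis likely splits according to whether $k/2 \le d < \lceil 2k/3 \rceil$ (where a longer alternating configuration must be analyzed) or $\lceil 2k/3 \rceil \le d \le k - 1$ (where a single augmenting pair of edges already locates $W^*$), paralleling the case distinction used elsewhere in the paper.
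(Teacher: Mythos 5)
The paper does not prove Lemma~\ref{lem:almost}; it imports it directly from \cite[Lemma~5.7]{Lu} and uses it as a black box, so there is no in-paper argument to compare your sketch against.

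As a sketch of how such a stability lemma might be established, your plan is in the right spirit (fix a maximum matching $M$, assume the leftover set $L=V(H)\setminus V(M)$ is large, and use the degree condition together with the absence of short augmenting configurations to force the structure of the space barrier), but two points need repair. First, the structural target is misstated: in $H_k^{k-d}(U,W)$ with $|W|\approx n/k$ and $k/2\le d\le k-1$, the constraint $|e\cap W|\le k-d$ is nearly vacuous since a typical $k$-set meets $W$ in about one vertex; the informative constraint to extract from maximality of $M$ is the lower bound $|e\cap W|\ge 1$, i.e.\ that edges through $d$-sets lying in $L$ cannot be contained in $U$, and the ``rigid set of size $\approx(1-1/k)(n-d)$'' that a typical link avoids should be (essentially) $U$, not $W$. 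Second, the crux of the argument --- showing that the avoided sets associated to \emph{different} $d$-sets in $L$ can be aligned into one common $W^*$, robustly, when the degree slack is only $\rho n^{k-d}$ --- is pointed at (``EKR-type stability,'' ``RRS augmenting-path framework'') but not carried out. This is where the bulk of the work in \cite{Lu} lies; it is genuinely delicate for $d$ near $k/2$, where a single candidate augmenting move involves many edges of $M$ and the link graphs are $(k-d)$-uniform rather than $1$-uniform, so the codegree RRS machinery does not transfer without substantial adaptation. The later steps (adjusting to $|W|=s-1$ and propagating the edge constraint from $d$-sets in $L$ to all edges) are routine once $W^*$ is in hand.
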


Now we state our absorbing lemmas, which are the new ingredients in the proof of Theorem~\ref{main}.
We postpone their proofs to Sections~\ref{sec3} and~\ref{sec4}.
Indeed, a known method of R\"{o}dl, Ruci\'{n}ski, and Szemer\'{e}di~\cite{RRS06} can be used to prove the absorbing lemma for the case $\lceil 2k/3 \rceil \leq d \leq k-1$ (Lemma~\ref{lem:abs2}), but does not apply for smaller $d$.
Fortunately, for smaller values of $d$ the quantity $\delta_d(H)$ is larger, in particular, $\delta_d(H)\ge (1/4+o(1))\binom{n-d}{k-d}$.
This allows us to use the lattice-based absorption method, developed by Han~\cite{Han15_mat}, and prove the absorbing lemma for the case $\min\{3,k/2\}\le d< \lceil 2k/3\rceil$ (Lemma~\ref{lem:abs1}).

\begin{lemma}\label{lem:abs1}
For $\min\{3,k/2\}\le d< \lceil 2k/3\rceil$ and $\gamma>0$, suppose $0<1/n\ll\alpha\ll \gamma,1/k$.
Let $H$ be an $n$-vertex $k$-graph with $\delta_d(H)\ge (1/4+\gamma)\binom{n-d}{k-d}$.
Then there exists a matching $M$ in $H$ of size $|M|\le \gamma n/k$ such that for any subset $R\subseteq V(H)\setminus V(M)$ with $|R|\le \alpha^{2} n$, $H[R\cup V(M)]$ contains a matching covering all but at most $k+1$ vertices.
\end{lemma}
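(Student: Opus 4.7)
The plan is to follow the lattice-based absorbing framework of Han~\cite{Han15_mat}, specialised to the regime $\min\{3,k/2\}\le d<\lceil 2k/3\rceil$ in which $\delta_d(H)\ge (1/4+\gamma)\binom{n-d}{k-d}$ leaves enough room to control the lattice. First I would construct a partition $\mathcal{P}=\{V_1,\dots,V_r\}$ of $V(H)$ via a closeness relation: two vertices $u,v$ are close if the number of $(k-1)$-sets $T$ with $T\cup\{u\},\, T\cup\{v\}\in E(H)$ is at least $\beta n^{k-1}$, for some $\beta=\beta(\gamma,k)>0$. The minimum $d$-degree condition makes this relation approximately transitive, so after a standard cleanup it becomes an equivalence with a constant number of classes. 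Since $\delta_d(H)>\binom{n-d}{k-d}/4$, two vertices in different classes have essentially disjoint link neighbourhoods, and more than four classes would violate the degree condition; hence $r\le 4$.

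Next I would analyse the lattice $L^\mu_\mathcal{P}(H)\subseteq\mathbb{Z}^r$ generated by the differences $\bfi_\mathcal{P}(e_1)-\bfi_\mathcal{P}(e_2)$, where $\bfi_\mathcal{P}(e)=(|e\cap V_1|,\dots,|e\cap V_r|)$ and $e_1,e_2$ range over \emph{robust} edges, namely those whose index vectors are realised by at least $\mu n^k$ edges for a small $\mu=\mu(\gamma,k)$. The goal is to show $L^\mu_\mathcal{P}(H)$ is transferral-rich, i.e.\ contains every transferral $\mathbf{e}_i-\mathbf{e}_j$; otherwise, the residual defect in the sum-zero sublattice has bounded index. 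Either outcome suffices: in the first case the lattice allows one to swap a single vertex between any two classes via an edge exchange, while in the second case the defect is of order at most $k+1$ and will translate into the $k+1$ vertex slack in the conclusion.

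With the lattice in hand, the absorbing matching $M$ is built by the random-selection scheme of R\"odl--Ruci\'nski--Szemer\'edi~\cite{RRS06}. For each vertex $v\in V(H)$ define a \emph{$t$-absorber} for $v$ to be a matching of $H$ of constant size $t=t(k,\gamma)$ that can be locally rewired through the lattice to incorporate $v$; the lower bound on $\delta_d(H)$ together with the transferral-richness from the previous step ensures that each $v$ has $\Omega(n^{kt-1})$ absorbers. Picking each potential absorber independently with probability $p\asymp n^{1-kt}$ and deleting conflicts then yields, via Chernoff and a union bound, a matching $M$ with $|M|\le\gamma n/k$ in which every vertex is still covered by at least $2\alpha^2 n$ pairwise disjoint absorbers. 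Finally, given $R\subseteq V(H)\setminus V(M)$ with $|R|\le\alpha^2 n$, greedily absorb the vertices of $R$ using distinct absorbers in $M$; the remaining edges of $M$ match everything else up to the lattice defect of at most $k+1$ vertices.

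The main obstacle is the lattice step: ruling out partitions for which $L^\mu_\mathcal{P}(H)$ is so degenerate that no constant-size absorber exists. The hypothesis $d<\lceil 2k/3\rceil$ is used here, since it forces $\delta_d(H)$ strictly above $\binom{n-d}{k-d}/4$ and hence limits $r\le 4$; once $r$ is bounded and robust edges exist across every pair of classes, the lattice defect is bounded and the absorber counts can be lower-bounded uniformly in $v$.
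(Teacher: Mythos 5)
Your high-level framework (lattice-based absorption \`a la Han) is the right one, but there are two concrete gaps that prevent the proposal from closing.

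First, the bound $r\le 4$ is wrong, and the error matters. Since $\delta_d(H)\ge(1/4+\gamma)\binom{n-d}{k-d}$ with $\gamma>0$ \emph{strict}, applying Han's partition lemma with $\delta=1/4+\gamma/2$ gives $r\le\lfloor 1/\delta\rfloor\le 3$. The paper's entire lattice analysis (its Proposition~\ref{prop:absorb} and Claim~\ref{clm:r=2}) is a case analysis for $r\in\{2,3\}$ only; with $r=4$ the key structural statement one needs is false and the argument breaks. So your proposal is working in a regime that the hypothesis already rules out, and if you \emph{did} have to handle $r=4$ you would need additional ideas.

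Second, and more seriously, your treatment of the transferral-free case is hand-waving where the paper has a genuine new lemma. You assert that ``the residual defect in the sum-zero sublattice has bounded index'' and that this index ``translates into the $k+1$ vertex slack,'' but neither claim is made precise, and the second is simply not how the slack arises. The paper proves a sharp structural fact: for a transferral-free $\cP$ with $r\le 3$ under the $1/4$-degree condition, $(2,-2)$ or $(3,-3)\in L^{\mu}_{\cP}(H)$ when $r=2$, and $(-2,1,1),(1,-2,1),(1,1,-2)\in L^{\mu}_{\cP}(H)$ when $r=3$. From this it extracts Proposition~\ref{prop:absorb}: for \emph{every} $(k+2)$-set $U$ outside $V_0$, there exist $i,j$ with $\bfi_{\cP}(U)-\bfu_i-\bfu_j\in L^{\mu}_{\cP}(H)$. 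The absorption then iterates on $(k+2)$-sets, not single vertices: each step covers $k+2$ leftover vertices at the cost of uncovering $2$ vertices from a sacrificial edge, for a net gain of $k$, and terminates when fewer than $k+2$ vertices remain uncovered --- which is where the bound $k+1$ actually comes from. Your per-vertex absorbers with ``$\Omega(n^{kt-1})$ absorbers for each $v$'' do not exist in the transferral-free regime; one can only absorb $k$-sets whose index vector lies in $I^{\mu}_{\cP}(H)$, and single vertices need not satisfy any such condition. Without Proposition~\ref{prop:absorb} (or an equivalent), the argument does not close.

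Finally, a minor point: the proposal omits the exceptional class $V_0$ and the step that separately matches $V_0$ away using $\delta_1(H)>\gamma\binom{n-1}{k-1}$; this is routine but still has to be said.
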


\begin{lemma}\label{lem:abs2}
For $\lceil 2k/3 \rceil \leq d \leq k-1$ and $\gamma'> 0$, suppose $0<1/n\ll\beta\ll \gamma',1/k$.
Let $H$ be an $n$-vertex $k$-graph with $\delta _d (H) \geq \gamma' n^{k-d}$, then there exists a matching $M'$ in $H$ of size $|M'| \leq \beta n/k$ and such that for any subset $R\subseteq V(H)\setminus V(M')$ with $|R|\leq\beta^2 n$, $H[V(M')\cup R]$ contains a matching covering all but at most $2k-d-1$ vertices.
\end{lemma}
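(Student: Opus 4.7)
The plan is to adapt the absorbing method of R\"{o}dl, Ruci\'{n}ski, and Szemer\'{e}di~\cite{RRS06} to the present minimum $d$-degree setting. The strategy is to build, probabilistically, a small matching $M'$ whose edges act as flexible ``slots'': for every candidate leftover $k$-set $T\subseteq V(H)\setminus V(M')$, some edge $f\in M'$ should have the property that $T\cup f$ can be partitioned into two edges of $H$, so that swapping $f$ for this pair absorbs $T$ into the final matching.

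Call such an $f$ an \emph{absorber for $T$}, and write $\mathcal{F}(T)$ for the set of all of them. The heart of the proof is the claim that $|\mathcal{F}(T)|\ge c n^k$ for every $k$-set $T$, with $c=c(\gamma',k)>0$. Fix a partition $T=T_1\cup T_2$ with $|T_1|=d$ and $|T_2|=k-d\le d$, using $d\ge\lceil 2k/3\rceil$. The hypothesis $\delta_d(H)\ge\gamma' n^{k-d}$ supplies $\Omega(n^{k-d})$ edges $e_1\supseteq T_1$ meeting $T$ only in $T_1$. Averaging the $d$-degree condition over $d$-extensions of $T_2$ gives $\deg_H(T_2)=\Omega(n^d)$, and hence $\Omega(n^d)$ choices of $e_2\supseteq T_2$ disjoint from $e_1$ and meeting $T$ only in $T_2$. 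Together this yields $\Omega(n^k)$ ordered $2$-matchings $(e_1,e_2)$ covering $T$, and a supersaturation/extension argument in the spirit of~\cite{RRS06} shows that a positive fraction of these additionally satisfy $(e_1\cup e_2)\setminus T\in E(H)$, producing the required number of absorbers.

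The remaining steps are standard. Include each edge of $H$ in a random set $F$ independently with probability $p\asymp \beta/n^{k-1}$; then Chernoff plus a union bound over the $\binom{n}{k}$ choices of $T$ yield, with high probability, $|F|\le\beta n/k$ and $|F\cap\mathcal{F}(T)|=\Omega(\beta n)$ for every $T$. The expected number of edge pairs in $F$ sharing a vertex is $O(p^2 n^{2k-1})=O(\beta^2 n)$, so removing one edge per overlap gives a matching $M'$ of size at most $\beta n/k$ with $|M'\cap\mathcal{F}(T)|=\Omega(\beta n)$ for every $T$. Given any $R\subseteq V(H)\setminus V(M')$ with $|R|\le\beta^2 n$, partition $R$ into $m:=\lfloor|R|/k\rfloor$ disjoint $k$-sets $T_1,\dots,T_m$ (leaving $|R|-mk\le k-1$ vertices aside) and greedily pick pairwise distinct absorbers $f_i\in M'\cap\mathcal{F}(T_i)$; this succeeds since $\Omega(\beta n)\gg m$. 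Replacing each $f_i$ by the two edges partitioning $T_i\cup f_i$ produces a matching in $H[V(M')\cup R]$ missing only the $\le k-1\le 2k-d-1$ set-aside vertices. The main obstacle is the uniform lower bound on $|\mathcal{F}(T)|$: the constraint $d\ge\lceil 2k/3\rceil$ is precisely what makes the supersaturation step close, since both $T_1$ (of size $d$) and the $d$-extension of $T_2$ can then be accommodated by the $d$-degree condition while leaving $\Theta(n)$ free vertices for the extensions; for smaller $d$ this route breaks down, necessitating the lattice-based method of Lemma~\ref{lem:abs1}.
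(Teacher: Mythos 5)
Your absorbing structure differs from the paper's, and the difference is where the proof breaks. You absorb $k$-sets $T$ with a ``perfect'' absorber $f$: $T\cup f$ is exactly partitioned into two edges $e_1,e_2$, so $f=(e_1\setminus T_1)\cup(e_2\setminus T_2)$ with $|e_1\setminus T_1|=k-d$, $|e_2\setminus T_2|=d$, and no leftover vertices. The paper (following R\"odl--Ruci\'nski--Szemer\'edi) instead absorbs $(2k-d)$-sets $S$ with a ``lossy'' absorber: the two replacement edges cover all of $S$ but only $d$ vertices of $e$, deliberately wasting $k-d$ vertices of $e$. That waste is not a defect; it is the mechanism.

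The gap in your argument is the sentence asserting that ``a supersaturation/extension argument $\ldots$ shows that a positive fraction of these additionally satisfy $(e_1\cup e_2)\setminus T\in E(H)$.'' That claim is not supported by $\delta_d(H)\ge\gamma'n^{k-d}$. After choosing $A=e_1\setminus T_1$ (size $k-d$, via $\deg_d(T_1)$) and $B=e_2\setminus T_2$ (size $d$, via $\deg_{k-d}(T_2)$ and Proposition~\ref{prop:abs}), the requirement $A\cup B\in E(H)$ is a \emph{membership} condition on a $k$-set whose two pieces are already fixed. A minimum $d$-degree condition only supplies \emph{extensions} of a prescribed set of size at most $d$ to edges; it says nothing about whether a particular already-chosen pair glues into an edge. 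When $\gamma'$ is small, the $\Omega(n^d)$ sets $B$ with $T_2\cup B\in E(H)$ and the $\Omega(n^d)$ sets $B$ with $A\cup B\in E(H)$ are each only a $\gamma'$-fraction of all $d$-sets and need not intersect at all, so there is no overlap to harvest. (Even choosing $A$ last does not help: then the joint condition ``$T_1\cup A\in E(H)$ and $B\cup A\in E(H)$'' is two constraints on $A$, again with no forced intersection.) The paper avoids this entirely by arranging all three edges as extensions: $e_1$ extends a fixed $(k-\ell_1)$-subset of $S$, $e_2$ extends the complementary $(k-\ell_2)$-subset of $S$, and $e$ extends the $d$-set $\{v_1,\dots,v_d\}$ assembled from the first two steps, leaving $k-d$ free coordinates of $e$ to close the count. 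The constraint $d\ge\lceil 2k/3\rceil$ enters exactly here, to guarantee $k-\ell_2\le k-\ell_1\le d$ so that Proposition~\ref{prop:abs} applies at every level; your stated reason for needing $d\ge\lceil 2k/3\rceil$ (``accommodating $T_1$ and the $d$-extension of $T_2$'') only uses $d\ge k/2$, which further signals that the supersaturation step is not actually closing. The probabilistic selection and greedy absorption in the second half of your write-up are fine and match the paper, but they rest on the unproven absorber count.
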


Now we combine Lemmas~\ref{Lulemma2.3},~\ref{lem:almost},~\ref{lem:abs1}, ~\ref{lem:abs2} and prove Theorem~\ref{main}.

\begin{proof}[Proof of Theorem~\ref{main}]
Given $k\geq 3$ and $k/2\leq d \leq k-1$.
Let $\varepsilon>0$ be as in Lemma~\ref{Lulemma2.3}, and we choose additional constants satisfying the following hierarchy depending on the value of $d$:
$$0<\sigma\ll \eta=\gamma\ll\varepsilon\ll 1/k \mbox{ if } k/2\le d< \lceil 2k/3\rceil;$$
and
$$0<\sigma\ll \eta\ll \gamma',\varepsilon\ll 1/k \mbox{ if } \lceil 2k/3 \rceil \leq d \leq k-1.$$
More precisely, take $\eta:=\gamma$ and $\sigma:=\alpha^2\ll \gamma$ in the case $\min\{3,k/2\}\le d<\lceil2k/3\rceil$, where $\alpha$ is given by Lemma~\ref{lem:abs1}; and take $\eta:=\beta\ll \gamma'$ and $\sigma:=\beta^2\ll \eta$ in the case $\lceil2k/3\rceil\le d\le k-2$, where $\beta$ is given by Lemma~\ref{lem:abs2}.
We also assume that $n$ is sufficiently large.
Let $H=(V,E)$ be an $n$-vertex $k$-graph satisfying the degree condition as in Theorem~\ref{main}.
We may assume $d\le k-2$ since the case $d=k-1$ has been proved by Han~\cite[Theorem 1.1]{Han15_mat}.
The extremal case immediately follows from Lemma~\ref{Lulemma2.3}.
Note that this is the only place where the exact $d$-degree condition is needed.
Thus, we may assume that $H$ is not $\varepsilon$-close to $H_k^{k-d}(U,W)$ for any partition $U,W$ of $V$ with $|W| = s-1$.

Note that when $\min\{3,k/2\}\le d<\lceil2k/3\rceil$, we have
\[
\binom{n-d-s+1}{k-d}\le \left( \frac{n-s+1}{n-k+1} \right)^{k-d} \binom{n-d}{k-d} \le \left( 1-\frac{s-k}{n-k+1} \right)^{k/3} \binom{n-d}{k-d}
\]
Using $1-x\le e^{-x}$ and $sk\ge n-2k$ (as $s\ge n/k-2$), we infer
\[
\binom{n-d-s+1}{k-d} \le e^{-\frac{s-k}{n-k+1}\cdot \frac{k}{3}}\binom{n-d}{k-d} \le e^{-\frac{n-2k-k^2}{n-k+1}\cdot \frac{1}{3}}\binom{n-d}{k-d}\le e^{-0.33}\binom{n-d}{k-d},
\]
where we used that $\frac{n-2k-k^2}{n-k+1}\ge 0.99$ as $n$ is large.
We conclude that $\binom{n-d}{k-d}-\binom{n-d-s+1}{k-d}\ge (1/4+\gamma)\binom{n-d}{k-d}$ as $e^{-0.33}\approx 0.71$ and $\gamma\ll 1/k$.
On the other hand, when $\lceil2k/3\rceil\le d\le k-2$, we have $\binom{n-d}{k-d}-\binom{n-d-s+1}{k-d}\ge \gamma' n^{k-d}$ by the choice of $\gamma'$.

First we find an absorbing set in $H$.
Note that $2k-d-1\ge k+1$ since $d\le k-2$.
By Lemma~\ref{lem:abs1} and Lemma~\ref{lem:abs2}, there exists a matching $M$ in $H$ of size $|M|\le \eta n/k$ such that, for every subset $R\subseteq V\setminus V(M)$ with $|R|\le \sigma n$, the induced $k$-graph $H[R\cup V(M)]$ contains a matching covering all but at most $\max \{k+1, 2k-d-1\}=2k-d-1$ vertices.

Let $H_1 = H[V\setminus V(M)]$ and $n_1=n-k|M|\ge (1-\eta) n$. Next we find an almost perfect matching in $H_1$.
Recall that $\delta_d(H)> \binom{n-d}{k-d}-\binom{n-d-s+1}{k-d}$, then we have that
\[
\delta_d(H_1)\ge \delta_d(H)-(\eta n)n^{k-d-1}\ge \binom{n_1-d}{k-d}-\binom{n_1-d-s+1}{k-d}-2\eta n_1^{k-d}
\]
and $H_1$ is not $(\varepsilon/2)$-close to $H_k^{k-d}(U,W)$ for any partition $U,W$ of $V(H_1)$ with $|W| = s-1$, since $n$ is sufficiently large and $\eta\ll \varepsilon$.
We now apply Lemma~\ref{lem:almost} with input $\sigma$ and $\varepsilon/2$ in place of $\varepsilon$, $H_1$ has a matching $M_1$ covering all but at most $\sigma n$ vertices of $V(H_1)$.
Denote by $U$ the set of these remaining vertices of $V(H_1)$. Then $|U|\le \sigma n$.

Let $\ell k+r=n$, where $0\le r\le k-1$.
Recall that $M$ is an absorbing matching in $H$, which implies that $H[V(M)\cup U]$ contains a matching $M_2$ covering all but at most $2k-d-1$ vertices.
Let $m$ be the number of unmatched vertices by $M_1\cup M_2$.
Then for the case $k/2\le d< \lceil 2k/3\rceil$ we have $m\le k+1$ by Lemma~\ref{lem:abs1};
more precisely, in this case $m=k+r$ when $r\in\{0,1\}$ and $m=r$ when $2\le r\le k-1$. Moreover, for the case $\lceil 2k/3\rceil\le d\le k-1$ we have $m\le 2k-d-1$ by Lemma~\ref{lem:abs2}; more precisely, in this case $m=k+r$ when $0\le r< k-d$ and $m=r$ when $k-d\le r\le k-1$.
These imply that $M_1\cup M_2$ is the desired matching.
\end{proof}

The following result is a weaker version of Lemma 5.6 in ~\cite{KOTo}.
It allows us to convert the fractional matchings into integer ones, and was proved by the weak hypergraph regularity lemma (and alternative proofs avoiding the regularity method are also known, see e.g. \cite{AFHRRS}).

\begin{lemma}[\cite{KOTo}]\label{lemma5.6}
 Let $k \geq 2$ and $1\leq d \leq k-1$ be integers, and let $\varepsilon>0$.
 Suppose that for some $b,c\in (0,1)$ and some $n_0\in \mathbb{N}$, every $k$-graph on $n \geq n_0$ vertices with $\delta_d(H)\geq c\binom{n-d}{k-d}$ has a fractional matching of size $(b+\varepsilon)n$.
 Then there exists an $n_0'\in \mathbb{N}$ such that every $k$-graph $H$ on $n \geq n_0'$ vertices with $\delta_d(H)\geq (c+\varepsilon)\binom{n-d}{k-d}$ has an integer matching of size at least $bn$.
\end{lemma}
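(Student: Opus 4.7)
The plan is to prove Lemma~\ref{lemma5.6} via the standard weak hypergraph regularity approach. Fix a hierarchy $0 < 1/n \ll \zeta \ll \eta \ll \varepsilon$ and apply the weak hypergraph regularity lemma to $H$ to obtain an equitable partition $V(H) = V_0 \cup V_1 \cup \cdots \cup V_t$ with $|V_0| \le \zeta n$ and $|V_i| = m$ for $i \in [t]$, where all but at most $\zeta\binom{t}{k}$ of the $k$-tuples of clusters are $\zeta$-regular and $t$ lies between a constant $T_0(\zeta)$ and $1/\zeta$. Define the cluster $k$-graph $R$ on $[t]$ by declaring $\{i_1, \ldots, i_k\}$ to be an edge precisely when $(V_{i_1}, \ldots, V_{i_k})$ is $\zeta$-regular with density at least $\sqrt{\zeta}$.

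The next step is to verify that $R$ inherits the minimum $d$-degree condition in the form $\delta_d(R) \ge c\binom{t-d}{k-d}$. This is a routine counting argument: for each $d$-subset $S = \{i_1, \ldots, i_d\} \subseteq [t]$, sum $\deg_H(T)$ over all vertex $d$-sets $T$ picking exactly one vertex from each $V_{i_j}$, $j \in [d]$. The total counts edges of $H$ meeting all of $V_{i_1}, \ldots, V_{i_d}$, each weighted by the number of valid $T \subseteq e$. The contribution from edges lying in non-regular blocks, in blocks of density less than $\sqrt{\zeta}$, or in blocks meeting $V_0$ is only $O(\zeta^{1/2} n^{k-d}) \cdot m^d$, which is absorbed by the $\varepsilon$-slack in the hypothesis $\delta_d(H) \ge (c+\varepsilon)\binom{n-d}{k-d}$. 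Normalizing by $m^k\binom{t-d}{k-d}$ gives the desired bound.

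Applying the hypothesis of the lemma to $R$ (permissible since $t$ is large) yields a fractional matching $\omega$ in $R$ of size at least $(b+\varepsilon)t$. To convert $\omega$ to an integer matching in $H$, process the edges $e = \{i_1, \ldots, i_k\}$ of $R$ with $\omega(e) > 0$ in any order, and for each $e$ extract $\lfloor (1-\eta)\omega(e)m \rfloor$ pairwise vertex-disjoint edges from the $k$-partite block $(V_{i_1}', \ldots, V_{i_k}')$, where $V_{i_j}'$ is the set of vertices of $V_{i_j}$ not yet used. The fractional matching constraint $\sum_{e' \ni i_j} \omega(e') \le 1$ combined with the $(1-\eta)$-discount ensures $|V_{i_j}'| \ge \eta m / 2$ throughout, and $\zeta$-regularity with density $\ge \sqrt{\zeta}$ on the full block forces density at least $\sqrt{\zeta}/2$ on the restricted block; a standard greedy argument then produces the required matching. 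Summing over all $e$, the total size is at least $(1-\eta)(b+\varepsilon)tm \ge bn$ provided $\eta$ is chosen sufficiently small compared to $\varepsilon$.

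The main obstacle is the parameter tuning: $\zeta$ must be small enough that both the degree loss from irregular or low-density blocks and from $V_0$ is dominated by the $\varepsilon$-slack, \emph{and} that the $\sqrt{\zeta}$-density bound survives the removal of up to a $(1-\eta)$-fraction of each cluster during the greedy extraction. These demands are consistent under the chosen hierarchy $\zeta \ll \eta \ll \varepsilon$ because $\varepsilon$ is fixed first in the hypothesis while $\zeta$ (hence $\sqrt{\zeta}$) may be chosen last, after the regularity lemma is invoked; no other step of the argument refines $\varepsilon$.
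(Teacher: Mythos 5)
The paper does not actually prove this lemma: it is cited verbatim from~\cite{KOTo}, with the remark that the original proof uses the weak hypergraph regularity lemma and that regularity-free proofs also exist (e.g., via~\cite{AFHRRS}). So there is no in-paper proof to compare against, but your sketch does follow the regularity route that the paper attributes to~\cite{KOTo}, and the conversion step (greedily extracting $\lfloor(1-\eta)\omega(e)m\rfloor$ disjoint edges per cluster-edge $e$, using the slicing-stability of $\zeta$-regularity) is correct as described.

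However, the degree-inheritance step has a genuine gap. You claim that the contribution to $\sum_T\deg_H(T)$ from irregular blocks is $O(\zeta^{1/2}n^{k-d})\cdot m^d$ and is absorbed by the $\varepsilon$-slack, but the weak regularity lemma only controls the \emph{total} number of irregular $k$-tuples of clusters (at most $\zeta\binom{t}{k}$); it says nothing about how these tuples distribute over a fixed $d$-set $S$ of clusters. It is entirely consistent with $\zeta$-regularity that \emph{every} $k$-tuple of clusters extending a particular $S$ is irregular (indeed $\zeta\binom{t}{k}\gg\binom{t-d}{k-d}$ when $t$ is large compared to $1/\zeta^{1/d}$, which is unavoidable since $t\geq T_0(\zeta)$ grows very fast). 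For such an $S$, the irregular-block error is $\Theta\bigl(m^k\binom{t-d}{k-d}\bigr)=\Theta\bigl(m^d n^{k-d}\bigr)$ — the same order as the main term — and the conclusion $\deg_R(S)\geq c\binom{t-d}{k-d}$ simply does not follow. Since the hypothesis of the lemma must hold for \emph{every} $d$-set of the reduced graph $R$ before you can invoke it, a single bad $S$ breaks the argument. Note also that the usual repair of discarding clusters contained in many irregular tuples (Markov over clusters) does not immediately fix this for $d\geq 2$: a surviving cluster being in few irregular tuples only bounds $E_S$ by roughly $\sqrt{\zeta}\binom{t-1}{k-1}$, which exceeds $\binom{t-d}{k-d}$ for large $t$. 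You need either a degree-preserving form of the hypergraph regularity lemma, a more careful cluster-deletion argument tailored to $d$-sets, or the regularity-free route of~\cite{AFHRRS}; as written the sketch has a hole at this point.
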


As a consequence we obtain the following result by the definition of $c_{k,d}^*$.

\begin{lemma}\label{lem:fractional}
Let $k, d$ be integers such that $1\le d\le k-1$ and $\gamma,\sigma>0$, the following holds for sufficiently large $n$.
Suppose $H$ is an $n$-vertex $k$-graph with $\delta_d(H)\ge (c_{k,d}^*+\gamma)\binom{n-d}{k-d}$, then $H$ contains a matching $M$ that covers all but at most $\sigma n$ vertices.
\end{lemma}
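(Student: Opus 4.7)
The plan is to apply Lemma~\ref{lemma5.6} with carefully chosen parameters to translate the fractional matching threshold $c_{k,d}^*$ into the desired integer matching statement. A matching covering all but at most $\sigma n$ vertices has at least $(1-\sigma)n/k$ edges, so the goal reduces to producing an integer matching of size at least $(1/k-\varepsilon)n$ for a suitable $\varepsilon$ chosen small compared to both $\sigma/k$ and $\gamma$.

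Concretely, I would set $\varepsilon := \min\{\gamma,\sigma/k\}/2 > 0$, $b := 1/k-\varepsilon$, and $c := c_{k,d}^* + \varepsilon$. By the very definition of $c_{k,d}^*$, there exists $n_0$ such that every $k$-graph $H$ on $n\ge n_0$ vertices with $\delta_d(H)\ge c\binom{n-d}{k-d}$ admits a perfect fractional matching, of size exactly $n/k = (b+\varepsilon)n$. This verifies the hypothesis of Lemma~\ref{lemma5.6} for the chosen $b,c,\varepsilon$, and the lemma then yields an $n_0'$ such that any $k$-graph $H$ on $n\ge n_0'$ vertices with $\delta_d(H)\ge (c+\varepsilon)\binom{n-d}{k-d} = (c_{k,d}^* + 2\varepsilon)\binom{n-d}{k-d}$ contains an integer matching of size at least $bn$.

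Finally, I would check that the constants align. Since $2\varepsilon \le \gamma$, the degree hypothesis $\delta_d(H)\ge (c_{k,d}^*+\gamma)\binom{n-d}{k-d}$ of the present lemma implies $\delta_d(H)\ge (c+\varepsilon)\binom{n-d}{k-d}$; and since $k\varepsilon \le \sigma/2$, an integer matching of size at least $bn = (1/k-\varepsilon)n$ leaves at most $n - kbn = k\varepsilon n \le \sigma n$ vertices uncovered, as required. No real obstacle arises in this argument: the result is essentially a bookkeeping consequence of the fractional-to-integer transfer in Lemma~\ref{lemma5.6} combined with the definition of $c_{k,d}^*$, and the only mild care needed is to keep the single parameter $\varepsilon$ of Lemma~\ref{lemma5.6} simultaneously small enough to absorb into $\gamma$ on the degree side and into $\sigma$ on the coverage side.
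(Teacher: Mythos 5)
Your proof is correct and takes essentially the same approach as the paper: both apply Lemma~\ref{lemma5.6} with $b$ just below $1/k$ and $c$ just above $c^*_{k,d}$, invoking the definition of $c^*_{k,d}$ to supply the required fractional matching of size $n/k$. The only difference is bookkeeping of constants---the paper reduces to the case $\sigma\ll\gamma$ and takes $c=c^*_{k,d}+\sigma/k$, $b=(1-\sigma)/k$, $\varepsilon=\sigma/k$, whereas you set $\varepsilon=\min\{\gamma,\sigma/k\}/2$ directly, which is equivalent.
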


\begin{proof}
Let $k,d$ and $\gamma$ be given, and let $n_0$ and $n_0'$ be given by Lemma~\ref{lemma5.6}.
Without loss of generality we may assume that $1/n\ll\sigma \ll \gamma, 1/k$.
Note that every $k$-graph $G$ with $\delta_d(G)\geq (c_{k,d}^*+\sigma/k)\binom{n-d}{k-d}$ has a perfect fractional matching of size $n/k$ by the definition of $c_{k,d}^*$; and then we use Lemma~\ref{lemma5.6} to transform this into an almost perfect integer matching.
More precisely, since $\delta_d(H)\ge (c_{k,d}^*+\sigma)\binom{n-d}{k-d}$ by the choice of $\sigma$, applying Lemma~\ref{lemma5.6} on $H$ with $b=(1-\sigma)/k$, $c=c^*_{k,d}+\sigma/k$ and $\varepsilon=\sigma/k$, we conclude that $H$ contains an integer matching of size at least $(1-\sigma)n/k$, that is, a matching covering all but at most $\sigma n$ vertices, as required.
\end{proof}

We are now in a position to prove Theorem~\ref{thm:asym}.
Its proof is very similar to that of Theorem~\ref{main}, where we replace Lemma~\ref{lem:almost} by Lemma~\ref{lem:fractional}, and Lemma~\ref{Lulemma2.3} is no longer needed.

\begin{proof}[Proof of Theorem~\ref{thm:asym}]
Let $\gamma>0$ and integers $1\le d\le k-1$ be given.
Note that $c^*_{k,d}\ge 1-(1-1/k)^{k-d}$ (see e.g.~\cite{AFHRRS}), and recall that Theorem~\ref{thm:asym} has been proved by Han and Treglown when $c^*_{k,d}\ge1/3$~\cite[Theorem 7.2]{HT}.
For the case $1\le d\le k/2$, we have $c^*_{k,d}\ge1/3$.
Indeed, since $(1-1/k)^k<1/e$, we have
$$c^*_{k,d}\ge1-(1-1/k)^{k-d}> 1-(1/e)^{1-d/k}\ge 1-(1/e)^{1/2}\ge 1/3.$$
These remarks imply that Theorem~\ref{thm:asym} holds for the case $1\le d\le k/2$.
Furthermore, we may assume $d\le k-2$ since the case $d=k-1$ has been proved by Han~\cite[Theorem 1.1]{Han15_mat}.
Next we deal with the case $k/2\le d\le k-2$.

Suppose $0<\sigma\ll \eta\ll\gamma_1,\gamma_2\ll\gamma\ll1/k$.
Let $H=(V,E)$ be an $n$-vertex $k$-graph with $\delta_d(H)\ge (c^*_{k,d}+\gamma)\binom{n-d}{k-d}$.
Recall that $c^*_{k,d}\ge 1-(1-1/k)^{k-d}>1-(1/e)^{1-d/k}$.
Thus, $c^*_{k,d}>1/4$ when $k/2\le d<\lceil2k/3\rceil$.
Consequently we have $\delta_d(H)\ge (1/4+\gamma_1)\binom{n-d}{k-d}$ when $k/2\le d<\lceil2k/3\rceil$ by the choice of $\gamma_1$, and $\delta_d(H)\ge \gamma_2 n^{k-d}$ when $\lceil 2k/3 \rceil \leq d \leq k-2$ by the choice of $\gamma_2$.

The proof is almost identical to the proof of Theorem~\ref{main} except that we use Lemma~\ref{lem:fractional} instead of Lemma~\ref{lem:almost}.
First we find an absorbing set in $H$.
Note that $2k-d-1\ge k+1$ since $d\le k-2$.
By Lemma~\ref{lem:abs1} and Lemma~\ref{lem:abs2}, there exists a matching $M$ in $H$ of size $|M|\le \eta n/k$ such that, for every subset $R\subseteq V\setminus V(M)$ with $|R|\le \sigma n$, the induced $k$-graph $H[R\cup V(M)]$ contains a matching covering all but at most $\max \{k+1, 2k-d-1\}=2k-d-1$ vertices.

Let $H_1 = H[V\setminus V(M)]$ and $n_1=n-k|M|\ge (1-\eta) n$. Next we find an almost perfect matching in $H_1$.
Recall that $\delta_d(H)\ge (c^*_{k,d}+\gamma)\binom{n-d}{k-d}$, then we have that
\[
\delta_d(H_1)\ge\delta_d(H)-(\eta n)n^{k-d-1}\ge(c^*_{k,d}+\gamma/2)\binom{n_1-d}{k-d},
\]
since $n$ is sufficiently large and $\eta\ll \gamma$.
Applying Lemma~\ref{lem:fractional} on $H_1$ with input $\sigma$ and $\gamma/2$ in place of $\gamma$, $H_1$ has a matching $M_1$ covering all but at most $\sigma n$ vertices of $V(H_1)$.
Denote by $U$ the set of these remaining vertices of $V(H_1)$.
Then $|U|\le \sigma n$.
Recall that $M$ is an absorbing matching in $H$, which implies that $H[V(M)\cup U]$ contains a matching $M_2$ covering all but at most $2k-d-1$ vertices.
Thus, $M_1\cup M_2$ is the desired matching.
\end{proof}

\section{Proof of Lemma~\ref{lem:abs1}}\label{sec3}
In this section we prove Lemma~\ref{lem:abs1}.
Because of the existence of the divisibility barrier (Construction~\ref{const1}), the absorbing lemma for perfect matching requires a minimum $d$-degree $\delta_d(H)\ge (1/2+o(1))\binom{n-d}{k-d}$.
However, we only have the minimum $d$-degree $\delta_d(H)\ge (1/4+o(1))\binom{n-d}{k-d}$.
Inspired by the divisibility barrier, we consider vertex partitions of $H$ and analyze them via the distribution of edges (robust edge-lattice).
Such approach was first used by Keevash and Mycroft~\cite{KM14}.

\subsection{Notation and preliminaries}
In order to state our results, we first introduce some notation.
Let $H$ be a $k$-graph on $n$ vertices.
We say that two vertices $u$ and $v$ are \emph{$(\beta, i)$-reachable} in $H$ if there are at least $\beta n^{ik-1}$ $(ik-1)$-sets $S \subseteq V(H)$ such that both $H[S \cup \{u\}]$ and $H[S \cup \{v\}]$ have perfect matchings.
We say that a vertex set $U$ is \emph{$(\beta, i)$-closed} in $H$ if every two vertices in $U$ are $(\beta, i)$-reachable in $H$.

We will work on a vertex partition $\cP=\{V_0, V_1, \dots, V_r\}$ of $V(H)$ for some integer $r\ge 1$.
We consider the $r$-dimensional vectors on the parts of $\cP$ \emph{except} $V_0$.
Formally, the \emph{index vector} $\bfi_{\cP}(S)\in \mathbb{Z}^r$ of a subset $S\subseteq V(H)$ with respect to $\cP$ is the vector whose coordinates are the sizes of the intersections of $S$ with each part of $\cP$ except $V_0$, namely, $\bfi_{\cP}(S)_{V_i}=|S\cap V_i|$ for $i\in [r]$.
We call a vector $\bfi\in \mathbb{Z}^r$ an $s$-vector if all its coordinates are nonnegative and their sum equals $s$, and we use $I_s^r$ to denote the set of all $s$-vectors.
Given $\mu >0$, a $k$-vector $\bfv$ is called a \emph{$\mu$-robust edge-vector} if there are at least $\mu n^k$ edges $e \in E(H)$ satisfying $\bfi_{\cP}(e)=\bfv$.
Let $I_{\cP}^{\mu}(H) \subseteq I_k^r$ be the set of all $\mu$-robust edge-vectors and let $L_{\cP}^{\mu}(H)$ be the \emph{lattice} (additive subgroup) generated by the vectors in $I_{\cP}^{\mu}(H)$.
For $i \in [r]$, let $\bfu_{i} \in \mathbb{Z}^r$ be the $i$-th \emph{unit vector}, namely, $\bfu_{i}$ has $1$ on the $i$-th coordinate and $0$ on other coordinates.
A \emph{transferral} is the vector $\bfu_{i}-\bfu_{j}$ for some $i \neq j$.

Suppose $I$ is a set of $k$-vectors in $\mathbb{Z}^r$ and $J$ is a set of vectors in $\mathbb{Z}^r$ such that any $\bfi\in J$ can be written as a linear combination of vectors in $I$, namely, $\bfi=\sum_{\bfv\in I}a_{\bfv}\bfv$.
We denote $C(r, k, I, J)$ as the maximum of $|a_{\bfv}|$, $\bfv\in I$, over all $\bfi\in J$ and $C(k,J) :=\max_{r\le k,\,I\subseteq I^r_k}C(r, k, I, J)$.

For a $k$-graph $H$, we first establish a partition $P$ of $V(H)$ and then study the so-called robust edge-lattice with respect to this partition.
The main tool for establishing the partition of $V(H)$ is the following lemma from~\cite[Lemma 3.3]{Han15_mat}.

\begin{lemma}[{\cite[Lemma 3.3]{Han15_mat}}]\label{partition}
Suppose $0<1/n\ll\beta \ll \varepsilon \ll \delta$.
Let $H$ be an $n$-vertex $k$-graph such that $\delta_{1}(H) \ge (\delta+k^{2}\varepsilon)\binom{n-1}{k-1}$.
Then there is a partition $\cP$ of $V(H)$ into $V_{0}, V_{1},\dots, V_{r}$ with $r \leq \lfloor 1/\delta \rfloor$ such that $|V_{0}|\leq \sqrt{\varepsilon}n$ and for any $i \in [r], |V_{i}|\geq \varepsilon^{2}n$ and $V_{i}$ is $(\beta, 2^{\lfloor 1/\delta \rfloor-1})$-closed in $H$.
\end{lemma}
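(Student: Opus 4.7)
The plan is to build the partition iteratively, by ``doubling'' the reachability level until the equivalence classes stabilize. Introduce a decreasing sequence $\beta = \beta_t \ll \cdots \ll \beta_1 \ll \varepsilon$ with $t := \lfloor 1/\delta\rfloor - 1$, and let $N_j(v) := N_{\beta_j, 2^{j-1}}(v)$ denote the vertices $(\beta_j, 2^{j-1})$-reachable from $v$. The key technical tool is a doubling lemma: if $u, v$ are $(\beta, i)$-reachable via some witness set $S$ of size $ik-1$ and $v, w$ are $(\beta, j)$-reachable via some $T$ of size $jk-1$, then $u, w$ are $(c\beta^2, i+j)$-reachable via $S \cup T \cup \{v\}$, for a constant $c = c(k) > 0$. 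Counting valid disjoint pairs $(S, T)$ that also avoid $\{u, v, w\}$ gives at least $c\beta^2 n^{(i+j)k - 1}$ such witnesses. As a consequence, $N_{j+1}(v) \supseteq \bigcup_{u \in N_j(v)} N_j(u)$, so reachability neighborhoods grow as the level doubles.

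Next, I would establish a base density: $|N_1(v)| \geq \delta n$ for every $v$. When $\delta + k^2\varepsilon > 1/2$, the inclusion-exclusion estimate $|L_u \cap L_v| \geq (2(\delta + k^2\varepsilon) - 1 - o(1))\binom{n-1}{k-1}$ (where $L_v$ is the $(k-1)$-link of $v$) shows directly that every pair is $(\beta_1, 1)$-reachable. For $\delta \leq 1/2$ the base bound from inclusion-exclusion is vacuous, and one settles for a weaker base estimate whose size is then boosted through the iteration. Apply the doubling lemma $t$ times, going from level $1$ up to $2^t$; at each step $|N_j(v)|$ grows via common-neighbor unions until it saturates. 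Once $|N_t(v)| \geq \delta n$ for all $v$, take the equivalence classes of the $(\beta_t, 2^{t-1})$-reachability relation, which is essentially transitive by the doubling lemma. There are at most $\lfloor 1/\delta\rfloor$ classes of size $\geq \delta n$ by pigeonhole; set these to be $V_1, \ldots, V_r$ and dump the classes of size $< \varepsilon^2 n$ into $V_0$. The bound $|V_0| \leq \sqrt \varepsilon n$ follows by observing that each $v \in V_0$ has $|N_1(v)| \geq \delta n$ mostly concentrated in the large classes $V_1, \ldots, V_r$, and a double-counting argument using the minimum degree then rules out too many such $v$.

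The main obstacle will be the careful management of the constants $\beta_j$ through the $t = \lfloor 1/\delta\rfloor - 1$ doubling steps, since each doubling incurs a polynomial loss $\beta_{j+1} \leq c(k)\beta_j^2$ and so $\beta_t$ decays very rapidly in $t$; this forces the hierarchy to be set so that $\beta_1$ is sufficiently large at the outset, with the slack $k^2\varepsilon$ in the degree condition absorbing the cumulative losses. A secondary obstacle is establishing $|N_t(v)| \geq \delta n$ in the regime $\delta \leq 1/2$, where pairwise inclusion-exclusion is insufficient; here one must rely on the iterated growth through doublings together with a contradiction argument that a saturated reachability class of size less than $\delta n$ would leave some outside vertex $w$ with $N_1(w)$ forced to meet the class, and hence by the doubling lemma $w$ should already lie in the class.
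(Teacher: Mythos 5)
The paper does not prove this lemma; it is imported verbatim from \cite[Lemma 3.3]{Han15_mat}, so your sketch has to be measured against Han's proof there. Your high-level strategy (iterated reachability, a ``doubling'' step, merging into closed classes) is indeed the right one, but two of your central claims are wrong as stated, and they are wrong in exactly the places where the real proof has to work.

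First, the doubling lemma. With a \emph{single} intermediate vertex $v$, the number of sets $S\cup T\cup\{v\}$ you produce is at most $c\beta^2 n^{(ik-1)+(jk-1)}=c\beta^2 n^{(i+j)k-2}$, a full factor of $n$ short of the $\beta' n^{(i+j)k-1}$ required for $(\beta',i+j)$-reachability. The correct statement needs $\Omega(n)$ intermediate vertices, i.e.\ it has the form: if $|N_i(u)\cap N_j(w)|\ge \varepsilon^2 n$ then $u,w$ are $(\beta'',i+j)$-reachable. Consequently your inclusion $N_{j+1}(v)\supseteq\bigcup_{u\in N_j(v)}N_j(u)$ is unjustified, reachability is \emph{not} transitive even approximately, and ``take the equivalence classes of the $(\beta_t,2^{t-1})$-reachability relation'' is not a legitimate step; overcoming this non-transitivity (by a maximality/merging argument on vertices whose reachability neighborhoods overlap substantially) is the actual content of the lemma. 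Second, the base claim $|N_1(v)|\ge\delta n$ for every $v$ is false: take $k=3$ and a vertex $v$ whose link is the complete graph on a set $C$ of size $\sqrt{\delta}\,n$, with all other edges meeting $C$ in at most one vertex; then $\delta_1(H)\ge(\delta+o(1))\binom{n-1}{2}$ yet $N_1(v)=\emptyset$. Such vertices are precisely why the exceptional set $V_0$ is allowed, so your derivation of $|V_0|\le\sqrt{\varepsilon}n$ from this claim also collapses. The statement that the hypothesis $\delta+k^2\varepsilon$ actually buys, and which drives both the bound $r\le\lfloor 1/\delta\rfloor$ and the control of $V_0$, is the pigeonhole fact that among \emph{any} $\lfloor 1/\delta\rfloor+1$ vertices some two are $(\beta_1,1)$-reachable, because their links must overlap in at least $\Omega(\varepsilon)\binom{n-1}{k-1}$ sets. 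Your sketch would need to be rebuilt around that fact and the corrected doubling lemma.
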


The next result from~\cite[Lemma 3.4]{Han15_mat} shows that $V_i \cup V_j$ is closed in $H$ if $\bfu_{i}-\bfu_{j} \in L_{\cP}^{\mu}(H)$, which means that we can merge $V_i$ and $V_j$ and keep the closedness.

\begin{lemma}[{\cite[Lemma 3.4]{Han15_mat}}]\label{transferral}
Let $0 <\mu, \beta \ll \varepsilon \ll 1/i_{0}$, then there exist $0 < \beta' \ll \mu, \beta$ and an integer $t \geq i_{0}$ such that the following holds for sufficiently large $n$.
Suppose $H$ is an $n$-vertex $k$-graph, and $\cP = \{V_0,V_1, \dots, V_r\}$ is a partition with $r\leq i_0$ such that $|V_{0}|\leq \sqrt{\varepsilon}n$ and for any $i \in [r], |V_{i}|\geq \varepsilon^{2}n$ and $V_{i}$ is $(\beta, i_{0})$-closed in $H$.
If $\bfu_{i}-\bfu_{j} \in L_{\cP}^{\mu}(H)$, then $V_{i}\cup V_{j}$ is $(\beta', t)$-closed in $H$.
\end{lemma}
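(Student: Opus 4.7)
The plan is to build, for any $u, v \in V_i \cup V_j$, many ``absorbing'' $(tk-1)$-sets $S$ such that both $H[S\cup\{u\}]$ and $H[S\cup\{v\}]$ have perfect matchings. The within-part case ($u, v \in V_i$ or $u, v \in V_j$) follows from the hypothesis by padding: any $(\beta, i_0)$-reachability witness of size $i_0 k -1$ can be extended to a $(tk-1)$-set by appending $t - i_0$ vertex-disjoint edges of $H$, which are easy to produce using any fixed $\bfv \in I_\cP^\mu(H)$ (nonempty whenever $\bfu_i - \bfu_j \ne 0$, else the lemma is trivial). So I focus on the cross-part case $u \in V_i, v \in V_j$.

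Write $\bfu_i - \bfu_j = \sum_{\ell=1}^p \bfv_\ell - \sum_{\ell=1}^p \bfw_\ell$ with $\bfv_\ell, \bfw_\ell \in I_\cP^\mu(H)$ (listed with multiplicity); the two sides have the same number of terms because every edge-vector has coordinate sum $k$ while $\bfu_i - \bfu_j$ has coordinate sum $0$, and $p$ is bounded by a constant depending only on $k$ and $r \le i_0$ via $C(k, \cdot)$. Greedily select $2p$ disjoint edges $e_1, \dots, e_p, f_1, \dots, f_p$ of $H$ avoiding $\{u, v\}$ with $\bfi_\cP(e_\ell) = \bfv_\ell$ and $\bfi_\cP(f_\ell) = \bfw_\ell$; at each step $\mu$-robustness leaves at least $\mu n^k/2$ valid edges. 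Set $A := \bigcup_\ell V(e_\ell)$ and $B := \bigcup_\ell V(f_\ell)$, each of size $pk$, and note that $\bfi_\cP(A) - \bfi_\cP(B) = \bfu_i - \bfu_j$.

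Consequently $A$ has exactly one more vertex in $V_i$ than $B$ and $B$ has exactly one more vertex in $V_j$ than $A$, while $|A \cap V_m| = |B \cap V_m|$ for all $m \ne i, j$. Pick $a^* \in A \cap V_i$, $b^* \in B \cap V_j$, and a part-preserving bijection $\phi: A \setminus \{a^*\} \to B \setminus \{b^*\}$. Each of the $pk + 1$ pairs $(u, a^*)$, $(v, b^*)$, and $(x, \phi(x))$ for $x \in A \setminus \{a^*\}$ lies inside a single $(\beta, i_0)$-closed part, so greedily choose disjoint $(i_0 k - 1)$-absorbing sets $R_{u, a^*}$, $R_{v, b^*}$, $\{R_{x, \phi(x)}\}_{x}$ for these pairs, each disjoint from $A \cup B \cup \{u, v\}$ and from the previously chosen patches; feasibility is standard since the forbidden-vertex count remains of constant size throughout, so at most $O(n^{i_0 k - 2})$ of the $\beta n^{i_0 k - 1}$ candidate patches are ruled out at any step. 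Let $S := A \cup B \cup R_{u, a^*} \cup R_{v, b^*} \cup \bigcup_x R_{x, \phi(x)}$; its size is $2pk + (pk + 1)(i_0 k - 1) = tk - 1$ where $t := p + i_0 + p i_0 k$.

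To see $S \cup \{u\}$ has a perfect matching, combine the perfect matchings of $R_{u, a^*} \cup \{u\}$, of $R_{v, b^*} \cup \{b^*\}$, of $R_{x, \phi(x)} \cup \{\phi(x)\}$ for every $x$, and of $A$ via $e_1, \dots, e_p$: since $\{\phi(x) : x \in A \setminus \{a^*\}\} = B \setminus \{b^*\}$, these together with $b^*$ cover $B$ exactly, and everything else is trivially covered once. The symmetric PM of $S \cup \{v\}$ uses $R_{v, b^*} \cup \{v\}$, $R_{u, a^*} \cup \{a^*\}$, $R_{x, \phi(x)} \cup \{x\}$, and $B$ via $f_1, \dots, f_p$. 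Multiplying the stepwise choice counts yields at least $(\mu/2)^{2p} (\beta/2)^{pk + 1} n^{tk - 1}$ ordered constructions, and each $S$ arises in at most a constant number of ways (from the ordering of the edges and the choices of $a^*, b^*, \phi$), so $u$ and $v$ are $(\beta', t)$-reachable with $\beta' = (\mu \beta)^{O_{k, i_0}(1)} \ll \mu, \beta$. The main obstacle is the size bookkeeping: including exactly $pk + 1$ reachability patches --- one per pair matched by $\phi$, plus the two for $(u, a^*)$ and $(v, b^*)$ --- is precisely what forces $|S| = 2pk + (pk + 1)(i_0 k - 1) \equiv -1 \pmod k$, so that $S \cup \{u\}$ and $S \cup \{v\}$ have the right cardinality to admit a perfect matching; with any other patch count, the construction fails for purely arithmetic reasons.
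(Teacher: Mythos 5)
The paper does not prove this lemma itself --- it is imported verbatim from Han~\cite[Lemma~3.4]{Han15_mat} --- so there is no in-paper proof to compare against; judged on its own, your reconstruction is correct and is essentially the argument used in that reference: decompose the transferral as $\sum_\ell \bfv_\ell-\sum_\ell\bfw_\ell$ over $\mu$-robust edge-vectors, realize both sides by disjoint edges, and stitch the resulting part-balanced leftover together with within-part $(\beta,i_0)$-reachability witnesses, with the count of witnesses chosen so that $|S|\equiv -1\pmod k$. The only point worth making explicit is that $t$ must be independent of $H$, so one should pad the representation (adding equal copies of some fixed robust vector to both sides) to force $p$ to equal its universal upper bound $C(k,\cdot)$ --- the same padding device you already use for the within-part case --- after which the argument is complete.
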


The following absorbing lemma from~\cite[Lemma 3.5]{Han15_mat} will be applied to prove Lemma~\ref{lem:abs1}.
Let $H$ be a $k$-graph and let $i\in k\mathbb{N}$.
For a $k$-vertex set $S$, we say a vertex set $T$ is an \emph{absorbing $i$-set for $S$} if $|T|=i$ and both $H[T]$ and $H[T\cup S]$ contain perfect matchings.

\begin{lemma}[{\cite[Lemma 3.5]{Han15_mat}}]\label{absorbing1}
Suppose $r\leq k$ and
\[
1/n \ll \alpha \ll \mu, \beta\ll 1/k, 1/t.
\]
Suppose that $\cP_{0} = \{V_0,V_1, \dots, V_r \}$ is a partition of $V(H)$ such that for $i \in [r]$, $V_{i}$ is $(\beta, t)$-closed.
Then there is a family $\mathcal{F}_{abs}$ of disjoint $tk^{2}$-sets with size at most $\beta n$ such that $H[V(\mathcal{F}_{abs})]$ contains a perfect matching and every $k$-vertex set $S$ with $\bfi_{\cP_{0}}(S)\in I_{\cP_{0}}^{\mu}(H)$ has at least $\alpha n$ absorbing $tk^{2}$-sets in $\mathcal{F}_{abs}$.
\end{lemma}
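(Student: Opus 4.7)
I would first choose constants $1/n \ll \alpha \ll \mu \ll \beta \ll \varepsilon \ll \gamma, 1/k$, and convert the hypothesis via the averaging identity $\binom{k-1}{d-1}\deg(v) \ge \binom{n-1}{d-1}\delta_d(H)$ into $\delta_1(H) \ge (1/4 + \gamma/2)\binom{n-1}{k-1}$. Then Lemma~\ref{partition} with $\delta = 1/4$ yields a partition $\cP = \{V_0, V_1, \ldots, V_r\}$ of $V(H)$ with $r \le 4$, $|V_0| \le \sqrt{\varepsilon}\, n$, each $|V_i| \ge \varepsilon^2 n$, and every $V_i$ being $(\beta, 2^3)$-closed in $H$.

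Next I would analyze the lattice $L^{\mu}_{\cP}(H)$ and coarsen the partition by merging parts. Declaring $i \sim j$ whenever $\bfu_i - \bfu_j \in L^{\mu}_{\cP}(H)$, and iterating Lemma~\ref{transferral} along each equivalence class, produces a coarser partition $\cP^* = \{V_0, V^*_1, \ldots, V^*_{r'}\}$ whose classes are $(\beta', t)$-closed for some fixed $t$ and after which no further merging is available. A counting argument then shows that the restricted lattice $L^{\mu}_{\cP^*}(H)$ nevertheless has full or near-full rank: pulling apart a $d$-set that meets several $V^*_j$'s and using $\delta_d(H) \ge (1/4 + \gamma)\binom{n-d}{k-d}$ together with $k/2 \le d < \lceil 2k/3 \rceil$ forces enough variety among the $\mu$-robust edge-vectors to span $\mathbb{Z}^{r'}$ up to a residue bounded by a constant depending only on $k$.

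With $\cP^*$ in hand I would invoke Lemma~\ref{absorbing1} to obtain a family $\mathcal{F}_{abs}$ of disjoint $tk^2$-sets with $|V(\mathcal{F}_{abs})| \le \beta n$ such that $H[V(\mathcal{F}_{abs})]$ has a perfect matching $M_{abs}$, and every $k$-set $S$ with $\bfi_{\cP^*}(S) \in I^{\mu}_{\cP^*}(H)$ has at least $\alpha n$ absorbers in $\mathcal{F}_{abs}$. Separately, a greedy argument using the degree condition builds a matching $M_0$ of size at most $\sqrt{\varepsilon}\, n$ covering all of $V_0$, and I set $M := M_0 \cup M_{abs}$, so that $V(M) \supseteq V_0 \cup V(\mathcal{F}_{abs})$ and $|M| \le \gamma n/k$ by the choice of constants. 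For any $R \subseteq V(H) \setminus V(M)$ with $|R| \le \alpha^2 n$, the set $R$ lies entirely in $V^*_1 \cup \cdots \cup V^*_{r'}$; I would partition $R$ into $k$-sets whose index vectors lie in $I^{\mu}_{\cP^*}(H)$, using the near-full rank of the lattice to match the required distribution modulo the bounded residue, absorb each through a distinct element of $\mathcal{F}_{abs}$, and let the unused absorbers contribute their own perfect matchings. Combining the size residue $|R| \bmod k$ with the balancing residue of $\bfi_{\cP^*}(R)$ leaves at most $k+1$ vertices uncovered, as required.

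\emph{Main obstacle.} The hard part is the lattice step: pushing Han's transferral framework through with only a $(1/4+\gamma)$-density hypothesis, rather than the $(1/2+\gamma)$ used in the perfect-matching setting. Divisibility-barrier obstructions may force $r' \ge 2$, so the argument cannot hope for a single closed class and must instead identify the precise lattice structure and carry out the absorbing step with a careful balancing of index vectors so that at most $k+1$ vertices remain unabsorbed in every configuration; this requires a case analysis tailored to $r \le 4$ and to the range $k/2 \le d < \lceil 2k/3\rceil$.
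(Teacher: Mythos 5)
Your proposal does not prove the statement at issue; it proves (or rather, sketches) a different one. The statement here is Lemma~\ref{absorbing1} itself: given a partition $\cP_0$ whose parts $V_1,\dots,V_r$ are $(\beta,t)$-closed, construct the family $\mathcal{F}_{abs}$ of disjoint $tk^2$-sets such that $H[V(\mathcal{F}_{abs})]$ has a perfect matching and every $k$-set $S$ with $\bfi_{\cP_0}(S)\in I^{\mu}_{\cP_0}(H)$ has at least $\alpha n$ absorbers in $\mathcal{F}_{abs}$. Midway through your outline you write that you would ``invoke Lemma~\ref{absorbing1} to obtain a family $\mathcal{F}_{abs}$'' --- that is, you assume exactly the conclusion you are asked to establish. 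What you have actually sketched is the proof of Lemma~\ref{lem:abs1} (the partition via Lemma~\ref{partition}, the merging via Lemma~\ref{transferral}, the transferral-free lattice analysis, the greedy covering of $V_0$, and the iterative absorption leaving at most $k+1$ vertices), which is the surrounding argument in Section~\ref{sec3}, not the cited lemma. None of the hypotheses of Lemma~\ref{absorbing1} involve a degree condition or the range of $d$; the only inputs are the closedness of the parts and the $\mu$-robustness of the relevant index vectors, so a correct proof cannot route through $\delta_d(H)\ge(1/4+\gamma)\binom{n-d}{k-d}$ at all.

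For the record, the missing argument runs as follows (this is Han's proof, which the present paper imports by citation rather than reproving). First, for any $k$-set $S=\{u_1,\dots,u_k\}$ with $\bfi_{\cP_0}(S)\in I^{\mu}_{\cP_0}(H)$ there are at least $c\, n^{tk^2}$ absorbing $tk^2$-sets for $S$, for some $c\gg\alpha$: by $\mu$-robustness there are at least $\mu n^k-O(n^{k-1})$ edges $e=\{v_1,\dots,v_k\}$ disjoint from $S$ with $\bfi_{\cP_0}(e)=\bfi_{\cP_0}(S)$, so one may pair $u_i$ with $v_i$ lying in the same part $V_j$; since each $V_j$ is $(\beta,t)$-closed, each pair $(u_i,v_i)$ admits at least $\beta n^{tk-1}-O(n^{tk-2})$ reachability $(tk-1)$-sets $T_i$, chosen disjointly, such that both $H[T_i\cup\{u_i\}]$ and $H[T_i\cup\{v_i\}]$ have perfect matchings; then $e\cup T_1\cup\dots\cup T_k$ is an absorbing $tk^2$-set for $S$ (it has a perfect matching using $e$ and the $T_i\cup\{v_i\}$, and together with $S$ it has one using the $T_i\cup\{u_i\}$ and, again, $e$). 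Second, select each $tk^2$-set independently with probability $p\approx \beta n^{1-tk^2}$; by Chernoff and Markov, with positive probability the selection has size at most $\beta n$, contains few intersecting pairs, and retains at least $\alpha n$ absorbers for every relevant $S$ after discarding one set from each intersecting pair and every selected set that is not absorbing for some $S$. This probabilistic second step is analogous to the one you would need anyway in Lemma~\ref{lem:abs2}, but the first step --- manufacturing $\Omega(n^{tk^2})$ absorbers per $S$ out of closedness plus robustness --- is the content of the lemma and is entirely absent from your proposal.
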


Let us give the following simple and useful proposition for considering $d$-degree together with $d'$-degree for some $d\neq d'$.
\begin{proposition}\label{prop:abs}
Let $0\leq d'\le d\leq k-1$ and $H$ be a $k$-graph on $n$ vertices.
If $\delta_d(H)\geq c\binom{n-d}{k-d}$ for some $0\leq c\leq 1$, then $\delta_{d'}(H)\geq c\binom{n-d'}{k-d'}$.
\end{proposition}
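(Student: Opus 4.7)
The plan is to prove Proposition~\ref{prop:abs} by a standard double-counting argument, averaging the $d$-degree condition down to the $d'$-degree condition. The intuition is that every $d'$-set $S'$ is contained in many $d$-sets, each of which by hypothesis has large degree, so $S'$ itself must have large degree.

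More concretely, fix an arbitrary $d'$-subset $S' \subseteq V(H)$; I want to show $\deg_H(S') \ge c\binom{n-d'}{k-d'}$. I will count in two ways the number of pairs $(S,e)$ where $S$ is a $d$-set, $e \in E(H)$, and $S' \subseteq S \subseteq e$. On one hand, each $d$-set $S$ containing $S'$ contributes $\deg_H(S) \ge c\binom{n-d}{k-d}$ edges, and the number of such $d$-sets is $\binom{n-d'}{d-d'}$, giving at least $c\binom{n-d'}{d-d'}\binom{n-d}{k-d}$ pairs. On the other hand, each edge $e \in E(H)$ containing $S'$ contributes exactly $\binom{k-d'}{d-d'}$ such $d$-sets $S$, so the total is $\deg_H(S')\binom{k-d'}{d-d'}$. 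Comparing,
\[
\deg_H(S') \;\ge\; c\,\frac{\binom{n-d'}{d-d'}\binom{n-d}{k-d}}{\binom{k-d'}{d-d'}}.
\]

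The final step is to check the factorial identity
\[
\frac{\binom{n-d'}{d-d'}\binom{n-d}{k-d}}{\binom{k-d'}{d-d'}} \;=\; \binom{n-d'}{k-d'},
\]
which follows immediately by expanding both sides in factorials (the $(d-d')!$ cancels, and one is left with $(n-d')!/((n-k)!(k-d')!)$). There is no real obstacle here; the only thing to be slightly careful about is the boundary cases $d' = 0$ (where $\delta_0(H) = |E(H)|$ by convention) and $d' = d$ (where the statement is trivial), both of which the argument handles without modification.
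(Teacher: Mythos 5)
Your double-counting argument is exactly the paper's proof: the paper states in one line that $\delta_{d'}(H)\geq \binom{n-d'}{d-d'}\delta_d(H)/\binom{k-d'}{d-d'}$, which is precisely the inequality you derive (before substituting $\delta_d(H)\ge c\binom{n-d}{k-d}$ and simplifying via the factorial identity). Your write-up is just a more explicit version of the same averaging argument, and it is correct.
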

This proposition is straightforward since $\delta_{d'}(H)\geq \binom{n-d'}{d-d'} \delta_d(H)/\binom{k-d'}{d-d'}$.

\subsection{Proof of Lemma~\ref{lem:abs1}}
We now finish the proof of Lemma~\ref{lem:abs1}, which we restate below for convenience.

\begin{lemma*}
For $\min\{3,k/2\}\le d< \lceil 2k/3\rceil$ and $\gamma>0$, suppose $0<1/n\ll\alpha\ll \gamma,1/k$.
Let $H$ be an $n$-vertex $k$-graph with $\delta_d(H)\ge (1/4+\gamma)\binom{n-d}{k-d}$.
Then there exists a matching $M$ in $H$ of size $|M|\le \gamma n/k$ such that for any subset $R\subseteq V(H)\setminus V(M)$ with $|R|\le \alpha^{2} n$, $H[R\cup V(M)]$ contains a matching covering all but at most $k+1$ vertices.
\end{lemma*}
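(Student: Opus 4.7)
The plan follows the lattice-based absorbing method of Han~\cite{Han15_mat}, combining the tools (Lemmas~\ref{partition},~\ref{transferral},~\ref{absorbing1}) just stated. By Proposition~\ref{prop:abs}, the hypothesis upgrades to $\delta_1(H)\ge(1/4+\gamma)\binom{n-1}{k-1}$. Fix a hierarchy $1/n\ll\alpha\ll\mu,\beta'\ll\beta\ll\varepsilon\ll\gamma,1/k$, and apply Lemma~\ref{partition} with $\delta=1/4$ to obtain a partition $\cP=\{V_0,V_1,\ldots,V_r\}$ of $V(H)$ with $r\le 4$, $|V_0|\le\sqrt{\varepsilon}\,n$, and every $V_i$ for $i\in[r]$ satisfying $|V_i|\ge\varepsilon^2 n$ and being $(\beta,2^{3})$-closed in $H$.

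The heart of the argument, and the main obstacle, is to show that every transferral $\bfu_i-\bfu_j$ with $i\neq j\in[r]$ lies in the robust edge-lattice $L^{\mu}_{\cP}(H)$. The strategy is, for each pair $i\ne j$, to exhibit a $(k-1)$-set $T\subseteq V(H)\setminus V_0$ with both $|N_H(T)\cap V_i|\ge\varepsilon^{2}n/(2r)$ and $|N_H(T)\cap V_j|\ge\varepsilon^{2}n/(2r)$; then, after mildly shrinking $\mu$, both $\bfi_{\cP}(T)+\bfu_i$ and $\bfi_{\cP}(T)+\bfu_j$ are $\mu$-robust edge-vectors whose difference is exactly the required transferral. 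If no such $T$ existed for some pair, then a group-theoretic argument on $L^{\mu}_{\cP}(H)$, via a nontrivial homomorphism into a finite abelian group which vanishes on $L^{\mu}_{\cP}(H)$ but not on $\bfu_i-\bfu_j$, would impose a divisibility-type restriction that caps $|E(H)|$ strictly below $(1/4+\gamma)\binom{n}{k}$, contradicting the degree hypothesis. The restriction $d<\lceil 2k/3\rceil$ is exactly what keeps our threshold above the critical $1/4$ where the obstruction would otherwise be unavoidable.

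Once every transferral lies in $L^{\mu}_{\cP}(H)$, iterating Lemma~\ref{transferral} yields that $V^{*}:=V_1\cup\cdots\cup V_r$ is $(\beta',t)$-closed for some integer $t$ depending only on $k$. With respect to the coarsened partition $\cP_0=\{V_0,V^{*}\}$, the unique $\mu$-robust edge-vector is $(k)$ since almost all edges lie in $V^{*}$. Applying Lemma~\ref{absorbing1} to $\cP_0$ produces a family $\mathcal{F}_{\text{abs}}$ of at most $\beta' n$ disjoint $tk^{2}$-sets in $V^{*}$ whose union carries a perfect matching $M_{\text{abs}}$, and such that every $k$-set $S\subseteq V^{*}$ admits at least $\alpha n$ absorbing $tk^{2}$-sets. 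Because $\delta_1(H)$ is large, a greedy procedure yields a matching $M_0$ of exactly $|V_0|$ edges covering $V_0$, each using one vertex of $V_0$ and $k-1$ vertices of $V^{*}\setminus V(\mathcal{F}_{\text{abs}})$ not previously used. Set $M:=M_{\text{abs}}\cup M_0$; the bound $|M|\le tk\beta' n+\sqrt{\varepsilon}\,n\le\gamma n/k$ is immediate from the hierarchy.

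Finally, for any $R\subseteq V(H)\setminus V(M)$ with $|R|\le\alpha^{2}n$, observe that $R\subseteq V^{*}$ since $V_0\subseteq V(M_0)$. Partition $R$ into $\lfloor|R|/k\rfloor$ disjoint $k$-subsets $S_1,\ldots,S_q$, leaving at most $k-1\le k+1$ vertices uncovered. For each $S_j$ we have $\bfi_{\cP_0}(S_j)=(k)\in I^{\mu}_{\cP_0}(H)$, so we may greedily pick an unused absorbing $tk^{2}$-set $T_j\in\mathcal{F}_{\text{abs}}$ (available since $\alpha n\gg q\ge|R|/k$) and swap the matching on $T_j$ for the one on $T_j\cup S_j$ guaranteed by its absorbing property. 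The resulting matching covers $V(M)\cup R$ except for at most $k-1$ vertices, giving the desired absorption.
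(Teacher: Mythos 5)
Your argument has a genuine gap at its heart: the assertion that, under $\delta_d(H)\ge(1/4+\gamma)\binom{n-d}{k-d}$, \emph{every} transferral $\bfu_i-\bfu_j$ must lie in $L^{\mu}_{\cP}(H)$, and hence that the parts $V_1,\dots,V_r$ can all be merged into a single $(\beta',t)$-closed set $V^*$. This is false, and the paper explicitly flags it at the start of Section~\ref{sec3}. The divisibility barrier $H^0$ of Construction~\ref{const1}, with $|U|\approx|W|\approx n/2$, satisfies $\delta_d(H^0)\approx\tfrac12\binom{n-d}{k-d}$, so it certainly passes the $(1/4+\gamma)$ threshold; yet every edge has $|e\cap U|$ even, so the robust edge-lattice with respect to the natural bipartition contains $(2,-2)$ but \emph{not} $(1,-1)$. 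Your ``group-theoretic argument'' in fact describes precisely the homomorphism (reduction mod $2$ of the $U$-coordinate) that exists for this hypergraph, and it does not push $|E(H)|$ below the threshold — that only happens once the threshold reaches $1/2$, which is why the perfect-matching absorbing lemma needs $\delta_d\ge(1/2+o(1))\binom{n-d}{k-d}$. Once the transferral claim fails, your coarsened partition $\{V_0,V^*\}$ is not available, and the final step — partitioning $R$ into $k$-sets with index vector $(k)$ and absorbing each by $\mathcal{F}_{abs}$ — does not go through, because some $k$-subsets of $R$ will have index vectors (with respect to the true transferral-free partition) that are not $\mu$-robust edge-vectors.

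The paper's route around this obstruction is different in a material way: it keeps the transferral-free partition $\cP_0=\{V_0,V_1,\dots,V_r\}$ with $r\le 3$ (taking $\delta=1/4+\gamma/2$ in Lemma~\ref{partition}, which gives $r\le 3$ rather than your $r\le 4$), and proves Proposition~\ref{prop:absorb}: for any $(k+2)$-set $U$ in $V\setminus V_0$, there exist $i,j\in[r]$ with $\bfi_{\cP_0}(U)-\bfu_i-\bfu_j\in L^{\mu}_{\cP_0}(H)$. The engine behind this is Claim~\ref{clm:r=2}, which shows that even if $(1,-1)$ is missing, one of $(2,-2)$ or $(3,-3)$ must be in the lattice when $r=2$ (and the analogous statement for $r=3$). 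Absorption then proceeds $(k+2)$-at-a-time, ``wasting'' two vertices per step, and one additionally needs a buffer matching $M_2$ of edges with prescribed index vectors to supply the $c_{\bfv}$-edges used in re-expressing $\bfi_{\cP_0}(U')$ as a nonnegative combination — a piece entirely absent from your argument. This is also why the paper's conclusion leaves up to $k+1$ uncovered vertices, not the $k-1$ you claim: that tighter bound is unattainable precisely because transferrals may be missing. To repair your proof you would need both the transferral-free analysis (Proposition~\ref{prop:absorb}/Claim~\ref{clm:r=2}) and the buffer-matching mechanism.
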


One of the key steps in the proof of Lemma~\ref{lem:abs1} is the following proposition. We postpone its proof to the end of this section.

\begin{proposition}\label{prop:absorb}
Given $\min\{3, k/2\} \leq d< \lceil 2k/3 \rceil$, suppose $0< 1/n\ll\mu \ll \varepsilon \ll \gamma$.
Let $H$ be an $n$-vertex $k$-graph with $\delta_{d}(H) \ge(1/4+\gamma)\binom{n-d}{k-d}$, and let $\cP = \{V_0,V_1, \dots, V_r \}$ be a partition of $V(H)$ with $r \leq 3$ such that $|V_{0}|\leq \sqrt{\varepsilon}n$ and for each $i \in [r], |V_{i}|\geq \varepsilon^{2}n$, and $L_{\cP}^{\mu}(H)$ contains no transferral.
Then for every $U\subseteq V(H)\setminus V_{0}$ with $|U|=k+2$, there exist $i,j \in [r]$ such that $\bfi_{\cP}(U)-\bfu_{i}-\bfu_{j}\in L_{\cP}^{\mu}(H)$.
\end{proposition}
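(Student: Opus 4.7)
The plan is to pass to the quotient group $G := \mathbb{Z}^r / L_{\cP}^{\mu}(H)$ with projection $\pi$, and to analyze the subgroup $G_0 := \pi(\{\bfv \in \mathbb{Z}^r : \sum v_i = 0\})$ together with the coset $K := \pi(\{\bfv : \sum v_i = k\})$. Writing $t_i := \pi(\bfu_i - \bfu_1) \in G_0$ (so $t_1 = 0$, and the no-transferral hypothesis forces the $t_i$'s to be pairwise distinct), and noting that the degree condition guarantees a robust $k$-vector $\bfv_0 \in L_{\cP}^{\mu}(H)$ so that $\pi(\bfv_0) = 0 \in K$ and hence $K = G_0$, the conclusion $\bfi_{\cP}(U) - \bfu_i - \bfu_j \in L_{\cP}^{\mu}(H)$ for some $i,j$ translates, after shifting by $-\pi(2\bfu_1)$, into the set equality $G_0 = S$ where
\[
S := \{t_i + t_j : i,j \in [r]\} \subseteq G_0
\]
has at most $\binom{r+1}{2} \le 6$ elements.

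The case $r = 1$ is immediate since $G_0 = \{0\}$. For $r = 2$, $G_0 \cong \mathbb{Z}/m\mathbb{Z}$ for some $m \ge 2$, $S = \{0, t_2, 2t_2\}$, and $G_0 = S$ reduces to $m \le 3$. I will prove this by contradiction: assuming $m \ge 4$, all robust $k$-vectors $(b, k-b)$ satisfy $b \equiv b_0 \pmod m$ for some fixed $b_0$. For any $d$-set $D$ with $\bfi_{\cP}(D) = (d', d-d')$, the number of robust-index edges containing $D$ is at most $\binom{n-d}{k-d}(P + o(1))$, where
\[
P = \sum_{\substack{0 \le c \le k-d \\ c \equiv b_0 - d' \pmod m}} \binom{k-d}{c}\, p^c (1-p)^{k-d-c}, \qquad p = |V_1|/(|V_1|+|V_2|),
\]
and by averaging the global bound $\sum_{\bfv \notin I_{\cP}^{\mu}(H)} e(\bfv) \le (k+1)\mu n^k$ over the $d$-sets of a fixed index type, the non-robust contribution to $\deg(D)$ is $o(n^{k-d})$ for a typical such $D$. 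I then choose $d' \in [0, d]$ so that $P \le 1/4$: when $d > k/2$, pigeonhole produces a $d'$ with $(b_0 - d') \bmod m \notin [0, k-d]$ (forcing $P = 0$); and when $d = k/2$, a direct estimate on $\binom{k-d}{c}$ (exploiting $\min_c \binom{k-d}{c} = 1$ together with symmetry) handles the boundary case. This yields a good $D$ with $\deg(D) < (1/4 + \gamma)\binom{n-d}{k-d}$, contradicting the hypothesis.

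For $r = 3$, $L_0 := L_{\cP}^{\mu}(H) \cap \{\bfv : \sum v_i = 0\}$ is a transferral-free sublattice of $\mathbb{Z}^2$ (after identification with the degree-zero hyperplane) and $G_0 = \mathbb{Z}^2/L_0$. I will split on $\mathrm{rank}(L_0) \in \{0, 1, 2\}$. When $\mathrm{rank}(L_0) < 2$, $G_0$ is infinite so $G_0 = S$ fails, and an analog of the $r = 2$ witness argument---choosing $D$ inside a single part $V_i$ to eliminate most robust index classes---produces a $D$ violating the degree condition. When $\mathrm{rank}(L_0) = 2$, $G_0$ is finite and I need to verify $G_0 = S$; if instead $g \in G_0 \setminus S$ exists, then the coset of $g$ corresponds to a ``missing'' residue class of $k$-vectors, and I will choose $\bfi_{\cP}(D)$ so that every robust edge through $D$ lies in this missing class, again giving a $D$ with too small $\deg(D)$ via the same $P \le 1/4$ mechanism. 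The main obstacle is the $r = 3$ casework: rank-$2$ sublattices of $\mathbb{Z}^2$ come in many non-isomorphic forms (parameterized by Smith normal form), and for each one must locate the right witness $D$. The hypotheses $d \ge \min\{3, k/2\}$ and $d < \lceil 2k/3 \rceil$ enter precisely here, providing respectively the coordinate flexibility in $\bfi_{\cP}(D)$ and the spread in the residue-shifted binomial distribution needed to establish $P \le 1/4$.
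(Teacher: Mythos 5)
Your high-level framing (pass to the quotient $G := \mathbb{Z}^r / L_{\cP}^{\mu}(H)$, restrict to the degree-$0$ subgroup $G_0$, and show the set $S = \{t_i + t_j\}$ fills up $G_0$) is an accurate and slightly cleaner packaging of what the paper actually proves via Claim~\ref{clm:r=2}: for $r=2$ the paper shows $(2,-2)\in L$ or $(3,-3)\in L$, i.e.\ $|G_0|\le 3$, and for $r=3$ it shows $(-2,1,1),(1,-2,1),(1,1,-2)\in L$, i.e.\ $G_0\cong\mathbb{Z}/3\mathbb{Z}$; in both cases $G_0=S$ then follows from no-transferral. So the strategy is sound. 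However the execution has two concrete problems.

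First, your $r=2$ argument for $d>k/2$ asserts that some $d'\in[0,d]$ yields $(b_0-d')\bmod m\notin[0,k-d]$, \emph{forcing $P=0$}. This is false in general: take $k=10$, $d=6$ (admissible), and $m=4=k-d$; then $[0,k-d]$ covers every residue class modulo $m$, so no choice of $d'$ can give $P=0$. What is true, and what the paper actually uses, is the weaker fact that $\sum_{j=0}^{m-1}P_j=1$ and the residues $b_0-d'$ for $d'\in\{0,1,2,3\}$ are pairwise distinct modulo $m\ge 4$ (this is exactly where $d\ge 3$ enters, via the pairwise disjointness of $N(\bfv_0),\dots,N(\bfv_3)$), so pigeonhole gives some $P_{d'}\le 1/4$. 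You should replace the $P=0$ claim with this $\le 1/4$ pigeonhole; the remaining boundary case $d=2$ (which arises only when $k=4$) then needs the separate direct estimate, as you anticipate.

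Second, and more seriously, you leave the $r=3$ case essentially open and acknowledge it as the main obstacle. The plan you sketch---casework on $\mathrm{rank}(L_0)$ and on the Smith normal form of $L_0$, hunting for a witness $d$-set $D$ inside a single part---is not worked out and it is not clear it converges. The paper's treatment of $r=3$ is qualitatively different: it does not analyze $d$-vectors at all. Instead it first observes (via Proposition~\ref{prop:abs}, using $d\ge 2$) that $\delta_2(H)\ge(1/4+\gamma)\binom{n-2}{k-2}$, then looks at the six $2$-vectors $I^3_2$ and their neighborhoods in $I^3_{k-2}$. No-transferral forces $N((1,1,0)),N((1,0,1)),N((0,1,1))$ to be pairwise disjoint and disjoint from the appropriate $N((2,0,0))$-type sets, giving at least $3$ classes; if there are $\ge 4$ classes, pigeonhole kills the $2$-degree, and if there are exactly $3$ then the symmetric pairing $N((1,1,0))=N((0,0,2))$ etc.\ forces precisely $(-2,1,1),(1,-2,1),(1,1,-2)\in L$, hence $G_0\cong\mathbb{Z}/3\mathbb{Z}=S$. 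This ``downgrade to $2$-degree'' step is the idea your proposal is missing and is what collapses the Smith-normal-form casework to a single clean dichotomy. Also, your remark that $d<\lceil 2k/3\rceil$ provides ``spread in the residue-shifted binomial distribution'' is not where that hypothesis is actually used; inside the proposition it mainly guarantees $d\ge 2$ for small $k$ and, in the outer proof of Theorem~\ref{main}, that the $(1/4+\gamma)$ lower bound on $\delta_d(H)$ is available at all.
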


Now we are ready to prove Lemma~\ref{lem:abs1}.
Here is a brief outline of the proof.
We apply Lemma~\ref{partition} to $H$ and obtain a partition $\cP$ of $V(H)$ such that each part is closed and not too small.
We then merge two parts $V_i$ and $V_j$ if the transferral $\bfu_i - \bfu_j\in L_{\cP}^{\mu}(H)$.
So we obtain a transferral-free partition $\mathcal{P}_0 = \{V_0,V_1,\dots, V_{r'}\}$, where $r'\le 3$.
Lemma~\ref{absorbing1} implies the existence of a family $\mathcal{F}_{abs}$ of disjoint absorbing $tk^2$-sets, which can be used to absorb a small collection of $k$-sets each with index vector in $L_{\cP_0}^{\mu}(H)$.
The key point is that as long as there are at least $k+2$ vertices uncovered, Proposition~\ref{prop:absorb} implies that we can ``absorb'' $k$ vertices, and our (quantitative) choice of the family $\mathcal{F}_{abs}$ allows us to proceed the absorption to reduce the number of uncovered vertices in a greedy manner.
The absorption terminates when there are at most $k+1$ vertices left uncovered.

\begin{proof}[Proof of Lemma~\ref{lem:abs1}]
Fix $\gamma>0$ and let $C:=C(k,I_{2k}^r)$.
We define additional constants such that
\[
0<1/n\ll \alpha\ll \beta'\ll\beta,\mu\ll \varepsilon \ll \gamma,1/k,1/t,1/C.
\]
Let $H = (V,E)$ be a $k$-graph with $\delta_d(H)\ge (1/4+\gamma)\binom{n-d}{k-d}$.
Note that we have $\delta_1(H)\ge (1/4+\gamma)\binom{n-1}{k-1}$ by Proposition~\ref{prop:abs}.
We first apply Lemma~\ref{partition} on $H$ with $\delta=1/4+\gamma/2$.
Then we get a partition $\mathcal{P}=\{V_0,V'_1,\dots,V'_{r'}\}$ with $r' \le 3$ such that $|V_0|\le \sqrt{\varepsilon} n$ and for any $i\in[r']$, $|V'_i|\ge \varepsilon^2 n$ and $V'_i$ is $(\beta,4)$-closed in $H$.
If $\mathbf{u}_i-\mathbf{u}_j\in L_{\mathcal{P}}^{\mu}(H)$ for some $i, j\in[r']$, $i\neq j$, then we merge $V'_i$ and $V'_j$ to one part, and by Lemma~\ref{transferral}, $V'_i\cup V'_j$ is $(\beta'', t')$-closed for some $\beta''> 0$ and $t' \ge 4$.
We greedily merge the parts until there is no transferral in the $\mu$-robust edge-lattice.
Let $\mathcal{P}_0 = \{V_0,V_1,\dots, V_{r}\}$ be the resulting partition for some $1\le r\le 3$.
Note that we may apply Lemma~\ref{transferral} at most twice, and we see that $V_i$ is $(\beta', t)$-closed for each $i\in[r]$ by the choice of $\beta'$.
We apply Lemma~\ref{absorbing1} on $H$ and get a family $\mathcal{F}_{abs}$ of disjoint $tk^2$-sets, and we conclude that $|V(\mathcal{F}_{abs})|\le tk^2\beta' n$, $H[V(\mathcal{F}_{abs})]$ contains a perfect matching $M_1$, and every $k$-vertex set $S$ with $\mathbf{i}_{\mathcal{P}_0}(S)\in I^{\mu}_{\mathcal{P}_0}(H)$ has at least $\alpha n$ absorbing $tk^2$-set in $\mathcal{F}_{abs}$.

Let $V'=V\setminus V(\mathcal{F}_{abs})$ and $H'=H[V']$. Now we find a matching $M_2$ in $H'$ as follows.
For each $\mathbf{v} \in I^{\mu}_{\mathcal{P}_0}(H)$, we greedily pick a matching $M_v$ of size $C\alpha^2n$ such that $\mathbf{i}_{\mathcal{P}_0}(e) = \mathbf{v}$ for every $e\in M_v$.
Then let $M_2$ be the union of $M_v$ for all $\mathbf{v}\in I^{\mu}_{\mathcal{P}_0}(H)$, and we have $V_0\cap V(M_2)=\emptyset$.
It is possible to pick $M_2$ because there are at least $\mu n^k$ edges $e$ with $\mathbf{i}_{\mathcal{P}_0}(e) = \mathbf{v} \in I^{\mu}_{\mathcal{P}_0}(H)$.
To be more precise, since $|I^{\mu}_{\mathcal{P}_0}(H)|\le \binom{k+r-1}{r-1}\le \binom{k+2}{2}$ and $\alpha\ll \beta' \ll \mu \ll 1/t,1/C$, we obtain
\[
|V(M_2)\cup V(\mathcal{F}_{abs})| \le k|I^{\mu}_{\mathcal{P}_0}(H)|C\alpha^2 n+tk^2 \beta' n<\mu n,
\]
which yields that the number of edges intersecting these vertices is less than $\mu n^k$, as required.

Next we build a matching $M_3$ to cover all vertices in $V_0\setminus V(\mathcal{F}_{abs})$.
Note that $|M_3|\le |V_0|\le \sqrt{\varepsilon}n$.
Specifically, when we greedily match a vertex $v\in V_0\setminus V(\mathcal{F}_{abs})$, we need to avoid at most $k|M_3|+|V(M_2)\cup V(\mathcal{F}_{abs})|\le k\sqrt{\varepsilon}n+\mu n\le 2k\sqrt{\varepsilon}n$ vertices, and thus at most $2k\sqrt{\varepsilon}n^{k-1}$ $(k-1)$-sets.
Since $\delta_1(H)> \gamma\binom{n-1}{k-1}>2k\sqrt{\varepsilon}n^{k-1}$, we can always find a desired edge containing $v$ and put it to $M_3$ as needed.

Let $M=M_1\cup M_2\cup M_3$.
It is easy to see that $M$ is a matching because $M_1, M_2$ and $M_3$ are pairwise vertex disjoint.
Now we prove that $M$ is the desired matching satisfying the conclusion of Lemma~\ref{lem:abs1}.
Note that $|M|\leq|M_3|+|V(M_2)\cup V(\mathcal{F}_{abs})|/k\le 2\sqrt{\varepsilon}n\leq \gamma n/k$ since $\varepsilon\ll \gamma,1/k$.
Consider any subset $R\subseteq V\setminus V(M)$ with $|R|\le \alpha^2 n$.
Fix any set $U\subseteq R$ of $k+2$ vertices, there exist $i,j\in [r]$ such that $\mathbf{i}_{\mathcal{P}_0}(U)- \mathbf{u}_i-\mathbf{u}_j\in L^{\mu}_{\mathcal{P}_0}(H)$ by Proposition~\ref{prop:absorb}.
Note that this does not guarantee that we can delete one vertex $u$ from $U\cap V_i$ and delete another vertex $v$ from $U\cap V_j$ such that $\mathbf{i}_{\mathcal{P}_0}(U\setminus \{u,v\})\in L^{\mu}_{\mathcal{P}_0}(H)$, because it is possible that $U \cap V_i=\emptyset$ or $U \cap V_j=\emptyset$ for the $i,j$ returned by the proposition.
By $d\ge 2$ and the degree condition, there is a vector $\mathbf{v}\in I^{\mu}_{\mathcal{P}_0}(H)$ such that $\mathbf{v}_{V_i} \ge 1$ and $\mathbf{v}_{V_j} \ge 1$.
Notice that $M_2$ contains $C\alpha^2 n$ edges with index vector $\mathbf{v}$.
Fix one such edge $e\in E_{\mathbf{v}}$ and two vertices $v_1\in e\cap V_i$, $v_2\in e\cap V_j$.
We delete $e$ from $M_2$ and let $U' = U\cup (e\setminus \{v_1,v_2\})$.
Clearly, $\mathbf{i}_{\mathcal{P}_0}(U')\in L^{\mu}_{\mathcal{P}_0}(H)$ and $|U'| = 2k$.
Hence, by the definition of $L^{\mu}_{\mathcal{P}_0}(H)$, there exist nonnegative integers $b_{\mathbf{v}}, c_{\mathbf{v}}$ for all $\mathbf{v}\in I^{\mu}_{\mathcal{P}_0}(H)$ such that
\[
\mathbf{i}_{\mathcal{P}_0}(U')=\sum\limits_{\mathbf{v}\in I^{\mu}_{\mathcal{P}_0}(H)}b_{\mathbf{v}} \mathbf{v}-\sum\limits_{\mathbf{v}\in I^{\mu}_{\mathcal{P}_0}(H)}c_{\mathbf{v}} \mathbf{v},\]
which implies that
\[
\mathbf{i}_{\mathcal{P}_0}(U')+\sum\limits_{\mathbf{v}\in I^{\mu}_{\mathcal{P}_0}(H)}c_{\mathbf{v}} \mathbf{v}=\sum\limits_{\mathbf{v}\in I^{\mu}_{\mathcal{P}_0}(H)}b_{\mathbf{v}} \mathbf{v}.
\]
We have that $b_{\mathbf{v}}, c_{\mathbf{v}}\le C$ from the definition of $C$.
For each $\mathbf{v}\in I^{\mu}_{\mathcal{P}_0}(H)$, we pick $c_{\mathbf{v}}$ edges in $M_2$ with index vector $\mathbf{v}$.
By the equation above, the union of these edges and $U'$ can be partitioned as a collection of $k$-sets, which contains exactly $b_{\mathbf{v}}$ $k$-sets $F$ with $\mathbf{i}_{\mathcal{P}_0}(F) = \mathbf{v}$ for each $\mathbf{v}\in I^{\mu}_{\mathcal{P}_0}(H)$.
We repeat the process at most $\alpha^2 n/k$ times until there are at most $k+1$ vertices left.
Note that for each $\mathbf{v}\in I^{\mu}_{\mathcal{P}_0}(H)$, our algorithm consumes at most $(1 + C)\alpha^2n/k < C\alpha^2n$ edges from $M_2$ with index vector $\mathbf{v}$, which is possible by the definition of $M_2$.
Furthermore, after the process, we obtain at most $\left(2+C|I^{\mu}_{\mathcal{P}_0}(H)|\right)\alpha^2n/k\le \left(2+C\binom{k+2}{2}\right)\alpha^2 n/k<\alpha n$ $k$-sets $S$ with $\mathbf{i}_{\mathcal{P}_0}(S)\in I^{\mu}_{\mathcal{P}_0}(H)$ since $\alpha\ll 1/k, 1/C$.
By the absorbing property of $\mathcal{F}_{abs}$, we can greedily absorb them by $\mathcal{F}_{abs}$ and get a matching $M_4$.
Thus, $H[R\cup V(M)]$ contains a matching covering all but at most $k + 1$ vertices.
\end{proof}

\subsection{The transferral-free lattices}
In this subsection we prove Proposition~\ref{prop:absorb}.
We study the lattice structure $L_{\cP}^{\mu}(H)$ when it contains no transferral.

Fix $1\leq p\leq k-1$ and any $p$-vector $\bfv$, we define its \emph{neighborhood} to be $N(\bfv):=\{\bfv'\colon\, \bfv+\bfv' \in L_{\cP}^{\mu}(H)\}$.
Note that the vectors in $N(\bfv)$ may contain negative coordinates.
Moreover, assuming $r=2$, we claim that $N(\bfv)\cap I_{k-p}^{2}\neq \emptyset$ for any $1\leq p\leq d$ and any $p$-vector $\bfv=(i, p-i)$.
Indeed, otherwise, let $\bfv$ be a $p$-vector such that $N(\bfv)\cap I^2_{k-p}=\emptyset$.
This implies that the number of edges in $H[V\setminus V_0]$ with index vector $\bfi$ such that $\bfi-\bfv\in I^2_{k-p}$ is at most $|I^2_{k-p}|\mu n^k\le 2^{k-p}\mu n^k$.
Let $A_{\bfv}$ be the set of all $p$-sets $S$ with $\bfi_{\cP}(S) =\bfv$, and thus $|A_{\bfv}| =\binom{|V_1|}{i}\binom{|V_2|}{p-i}\ge \binom{\varepsilon^2 n}{p}$.
By averaging, there is a $p$-set $S\in A_{\bfv}$ such that
\[
\deg_H(S)\le |I^2_{k-p}|\mu n^k/|A_{\bfv}|+|V_0|n^{k-p-1}\le 2^{k-p}\mu n^k/\binom{\varepsilon^2 n}{p}+\sqrt{\varepsilon}n^{k-p}<\gamma \binom{n-p}{k-p}
\]
by $\mu\ll \varepsilon\ll \gamma$.
Since $p\le d$ and by Proposition~\ref{prop:abs}, this contradicts that $\delta_d(H)\geq (1/4+\gamma)\binom{n-d}{k-d}$.
Note that a similar argument works for $r = 3$; namely, for any $p$-vector $\bfv = (i,i',p-i-i')$ with $1 \le p\le d$, $N(\bfv)\cap I^3_{k-p}\neq\emptyset$.

\begin{claim}\label{clm:r=2}
Given $\min\{3, k/2\} \leq d < \lceil 2k/3 \rceil$, suppose $0< \mu \ll \varepsilon \ll \gamma$.
Let $H$ and $\cP$ be as defined in Proposition~\ref{prop:absorb}.
If $r=2$, then $(2, -2)\in L_{\cP}^{\mu}(H)$ or $(3, -3)\in L_{\cP}^{\mu}(H)$.
If $r=3$, then $(-2, 1,1),(1,-2,1),(1,1,-2) \in L_{\cP}^{\mu}(H)$.
\end{claim}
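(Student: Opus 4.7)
The plan is to extract the residue structure of $I^r_d$ modulo the sum-zero sublattice $L_0 := L \cap \{\bfv \in \mathbb{Z}^r : \sum_i \bfv_i = 0\}$ of $L := L_\cP^\mu(H)$ from the minimum-$d$-degree condition, and then read off the claimed lattice elements by a direct residue count. Write $x_i := |V_i|/n$ and observe that for each $\bfv \in I^r_d$, the degree of a $d$-set of type $\bfv$ equals, up to a negligible contribution from non-$\mu$-robust edge types, the partial sum
\[
g(\bfv) := \sum_{\bfw \in I^r_{k-d},\; \bfv+\bfw \in L} \binom{k-d}{\bfw} x_1^{w_1}\cdots x_r^{w_r}
\]
multiplied by $\binom{n-d}{k-d}$; the hypothesis $\delta_d(H) \ge (1/4 + \gamma)\binom{n-d}{k-d}$ therefore forces $g(\bfv) \ge 1/4 + \gamma/2$ for every $\bfv \in I^r_d$. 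Since $g$ depends on $\bfv$ only through its class modulo $L_0$, and the $g$-values on distinct cosets of $L_0$ are disjoint pieces of the full expansion $(x_1 + \cdots + x_r)^{k-d} = 1$, the number $N$ of distinct residues of $I^r_d$ modulo $L_0$ obeys $N(1/4 + \gamma/2) \le 1$, whence $N \le 3$.

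For $r = 2$, the elements of $I^2_d$ form a chain of length $d+1$ differing consecutively by the transferral $(1,-1)$, so their residues in $\mathbb{Z}^2/L_0$ form a cyclic orbit of length $m := \min\{t > 0 : t(1,-1) \in L_0\}$. When $d \ge 3$, the bound $N \le 3$ directly forces $m \le 3$; the lone boundary case $(k,d) = (4,2)$ is handled by checking that every choice of the edge-type set $A := \{a : (a, k-a) \in L \cap I^2_k\}$ with $m \ge 4$ leaves some pure or mixed $d$-set with inadmissibly small degree. Combined with $m \ne 1$ (no transferral), we conclude $m \in \{2, 3\}$, giving $(2,-2) \in L$ or $(3,-3) \in L$.

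For $r = 3$, I would focus on the six-element subset $\mathcal{V} := \{(d-2)\bfu_1 + \bfv' : \bfv' \in I^3_2\} \subseteq I^3_d$, consisting of three pure vectors $\bfv_1 = (d,0,0)$, $\bfv_2 = (d-2,2,0)$, $\bfv_3 = (d-2,0,2)$ and three mixed vectors $\bfv_4 = (d-1,1,0)$, $\bfv_5 = (d-1,0,1)$, $\bfv_6 = (d-2,1,1)$. All three pairwise differences inside the mixed group are transferrals, so $\bfv_4, \bfv_5, \bfv_6$ occupy three distinct residues modulo $L_0$; combined with $N \le 3$, every pure vector must coincide modulo $L_0$ with exactly one mixed vector. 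Among the nine cross-differences, six are transferrals and the remaining three are precisely $\bfv_6 - \bfv_1 = (-2,1,1)$, $\bfv_5 - \bfv_2 = (1,-2,1)$, and $\bfv_4 - \bfv_3 = (1,1,-2)$, so the unique feasible pairing is $\bfv_1 \equiv \bfv_6$, $\bfv_2 \equiv \bfv_5$, $\bfv_3 \equiv \bfv_4$ modulo $L_0$, delivering all three claimed vectors in $L_0 \subseteq L$. The main obstacle is ruling out the ``degenerate'' scenarios where two pure vectors are themselves identified modulo $L_0$; each such identification puts one of $(2,-2,0)$, $(2,0,-2)$, $(0,2,-2)$ into $L_0$, but a short case-analysis shows that no further identification involving the third pure vector or any mixed vector is then feasible without yielding a transferral, so the total residue count is at least $4$, contradicting $N \le 3$.
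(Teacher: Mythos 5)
Your proposal is correct and follows essentially the same strategy as the paper: average the degree of $d$-sets over the parts of $\cP$ to constrain the number of residue classes of $d$-vectors modulo the sum-zero sublattice $L_0$, then read off the lattice elements using the no-transferral hypothesis. Your formulation is somewhat cleaner and more unified, in that you extract the bound $N\le 3$ on the number of residues once via the normalized quantity $g(\bfv)$ and then apply it in both the $r=2$ and $r=3$ cases; the paper repeats the pigeonhole/averaging computation separately in each case (with $I^2_{k-d}$ for $r=2$ and $I^3_{k-2}$ via $\delta_2$ for $r=3$), without naming $N$. A few points of comparison: (i) for $r=2$ both you and the paper must treat the boundary case $(k,d)=(4,2)$ separately, since then $|I^2_d|=3$ and $N\le 3$ does not immediately bound the period $m$; the paper carries out the minimization $\max_x \min\{x^2,2x(1-x),(1-x)^2\}=1/4$ explicitly, which is what your ``short case-analysis'' amounts to and should really be written out. (ii) For $r=3$ your shifted set $\mathcal V\subseteq I^3_d$ plays the role of the paper's $I^3_2$ (with the same residue structure modulo $L_0$), so the arguments are equivalent. (iii) Your ``degenerate scenarios'' paragraph is actually redundant: once $N=3$ forces each pure vector into one of the three mixed residues, the six transferral cross-differences already pin down the unique pairing $\bfv_1\equiv\bfv_6$, $\bfv_2\equiv\bfv_5$, $\bfv_3\equiv\bfv_4$, which in particular implies all three pure vectors sit in distinct residues; an identification of two pure vectors is therefore impossible without a separate case-split. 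The only genuine (but routine) gap is that the $g$-values are only equal to the normalized degree of the \emph{average} $d$-set of type $\bfv$, not of every such set; the bound $g(\bfv)\ge 1/4+\gamma/2$ follows since the minimum degree is at most this average, and this should be stated as an averaging step rather than an equality.
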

\begin{proof}
Our proof is adapted from the proof of \cite[Claim 3.7]{Han15_mat}.
First assume that $r=2$.
Fix $(a_{0}, b_{0})\in I_{\cP}^{\mu}(H)$.
For the sake of a contradiction, assume that $(2, -2), (3, -3)\notin L_{\cP}^{\mu}(H)$.
Let $L_{0}$ be the sublattice (subgroup) of $L_{\cP}^{\mu}(H)$ such that $(a, b) \in L_{0}$ if $a+b=0$, and let $L_{k-d}=\{(a, b)\colon\,a+b=k-d\}$.
Let $t$ be the smallest positive integer such that $(t, -t)\in L_{0}$, then it is easy to see that $L_{0}$ is generated by $(t,-t)$.
By our assumption, $(1,-1),(2,-2), (3, -3)\notin L_{\cP}^{\mu}(H)$, and thus $t\geq 4$.
Let $t_{0}=\min\{t,k-d+1\}$.
It is easy to see that $L_{0}$ partitions $L_{k-d}$ into $t_0$ cosets $C_{0},C_1, \dots, C_{t_{0}-1}$ such that $C_{i}=(k-d-i, i)+L_{0}$\footnote{As usual, for a subgroup $H$ of a group $G$ and an element $x$ of $G$, $x+H$ denotes a coset of $H$.} for all $0\leq i\leq t_{0}-1$.
For any $0\leq j\leq d$ and $\bfv_{j}:=(d-j, j)$, by $a_0+b_0=k$, we have
\[
N(\bfv_{j})=(a_{0}, b_{0})-(d-j, j)+L_{0}=(k-d-(b_{0}-j), b_{0}-j)+L_{0}.
\]
This means that $N(\bfv_{j})=C_{i_{j}}$, where $i_{j}\equiv b_{0}-j \bmod t_{0}$.
We consider the following two cases depending on the value of $d$.

\vspace{4pt}
\emph{Case~$1$.} $d\ge 3$.
Since $(1,-1),(2,-2), (3, -3)\notin L_{\cP}^{\mu}(H)$, we claim that $N(\bfv_{0}), \dots, N(\bfv_{3})$ are pairwise disjoint.
Indeed, if $N(\bfv_0)\cap N(\bfv_3)\neq \emptyset$, say $\bfi\in N(\bfv_0)\cap N(\bfv_3)$, then we have $\bfi+\bfv_0, \bfi+\bfv_3\in L_{\cP}^{\mu}(H)$ and thus $(3,-3)=\bfv_0-\bfv_3\in L_{\cP}^{\mu}(H)$, a contradiction.
Other cases can be dealt with similarly.
Note that for any $0\leq j\leq d$ and $\bfv_{j}:=(d-j, j)$, we have $N(\bfv_{j})=C_{i_{j}}$, where $i_{j}\equiv b_{0}-j \bmod t_{0}$.
So we have $t_0 \geq 4$.
For $j=0, \dots, 3$, consider the following sums
\[
\sum_{(k-d-i_j,i_j)\in C_{i_j},\, 0\leq i_{j}\leq k-d} \binom{|V_{1}|}{k-d-i_{j}} \binom{|V_{2}|}{i_{j}},
\]
and note that their sum is at most $\binom{n-|V_0|}{k-d}$.
By the pigeonhole principle, there exists $j'$ such that the $j'$-th sum is at most $\frac{1}{4} \binom{n-|V_{0}|}{k-d}$.
This implies that
\[
\delta_{d}(H)\leq \frac{1}{4} \binom{n-|V_{0}|}{k-d}+|I_{k-d}^{2}|\mu n^{k}/ \binom{\varepsilon^{2}n}{d}+|V_{0}|n^{k-d-1}<(1/4+\gamma)\binom{n-d}{k-d}
\]
by $\mu \ll\varepsilon \ll \gamma$, a contradiction.

\vspace{4pt}
\emph{Case~$2$.} $d=2$. In this case we have $k=4$ by $\min\{3, k/2\} \leq d < \lceil 2k/3 \rceil$.
Then $k-d=2$, $t_0=3$, and for $i=0,1,2$, $C_i\cap I_2^2=\{(2-i,i)\}$.
Since $(1,-1),(2,-2)\notin L_{\cP}^{\mu}(H)$, we know that for $j=0,1,2$, $N(\bfv_j)\cap I_2^2=C_{i_j}\cap I_2^2=\{(2-i_j,i_j)\}$ are pairwise distinct.
So $\{N(\bfv_j)\cap I_2^2\}_{j=0,1,2}=\{\{(2,0)\}, \{(1,1)\}, \{(0,2)\}\}$.
Note that
\begin{align}
\min\left\{\binom{|V_1|}{2}, |V_1||V_2|, \binom{|V_2|}{2}\right\}&\leq \max\limits_{x\in (0,1)}\min\left\{\binom{xn}{2},xn(1-x)n,\binom{(1-x)n}{2}\right\}\nonumber \\
 &\leq \max\limits_{x\in (0,1)}\left(\min\left\{x^2,2x(1-x),(1-x)^2\right\}+\gamma/4\right)\binom{n-2}{2} \nonumber \\
 &=(1/4+\gamma/4)\binom{n-2}{2} \nonumber
\end{align}
since $n$ is large enough.
By averaging, we get that
\[
\delta_2(H)\le (1/4+\gamma/4)\binom{n-2}{2}+2^2\mu n^4/\binom{\varepsilon^2 n}{2}+|V_0|n<(1/4+\gamma)\binom{n-2}{2}
\]
by $\mu\ll \varepsilon\ll \gamma$, a contradiction.

Second assume that $r=3$.
Indeed, in this case, by $\min\{3,k/2\}\le d<\lceil 2k/3\rceil$ and Proposition \ref{prop:abs}, we have $\delta_{2}(H)\geq (1/4+\gamma)\binom{n-2}{k-2}$.
Consider the set of 2-vectors
\[
I^3_2=\{(2,0,0), (0,2,0), (0,0,2), (1,1,0), (1,0,1),(0,1,1)\}.
\]
Note that $N((1,1,0))$, $N((1,0,1))$, and $N((0,1,1))$ are pairwise disjoint because $L_{\cP}^{\mu}(H)$ contains no transferral.
Similarly, $N((2,0,0)) \cap N((1,1,0))=\emptyset$ and $N((2,0,0)) \cap N((1,0,1))=\emptyset$ (and similar equations hold for other vectors).
Moreover, recall that $N(\bfv) \cap I^3_{k-2}\neq \emptyset$ for all $\bfv \in I^3_2$.
Thus, $I^3_{k-2}$ are partitioned into classes $C_{1}', \cdots, C_{m}'$ for $m \geq 3$ where each class has the form $N(\bfv) \cap I^3_{k-2}$ for some (not necessarily unique) $\bfv \in I^3_2$.
If $m \geq 4$, then consider the sums $\sum_{(j_1,j_2,j_3) \in C_i'} \binom{|V_{1}|}{j_1} \binom{|V_{2}|}{j_2}\binom{|V_{3}|}{j_3}$ for $i=1,\dots,m$ and note that their sum is at most $\binom{n-|V_{0}|}{k-2}$.
Similar to the previous cases, by averaging, we have
\[
\delta_{2}(H)\leq \frac{1}{m} \binom{n-|V_{0}|}{k-2}+|I_{k-2}^{3}|\mu n^{k}/ \binom{\epsilon^{2}n}{2}+|V_{0}|n^{k-3}<(1/4+\gamma)\binom{n-2}{k-2}
\]
by $\mu \ll\epsilon \ll \gamma$, a contradiction.
Otherwise, $m=3$.
Since $N((1,1,0))$, $N((1,0,1))$ and $N((0,1,1))$ must be in different classes, we know that
\[
N((1,1,0))=N((0,0,2)), N((1,0,1))=N((0,2,0)), \mbox{~and~} N((0,1,1))=N((2,0,0)),
\]
which implies that $(-2, 1,1),(1,-2,1),(1,1,-2) \in L_{\cP}^{\mu}(H)$.
\end{proof}

\begin{proof}[Proof of Proposition~\ref{prop:absorb}]
Given such a $k$-graph $H$ and a partition $\cP$, the conclusion is trivial for $r=1$. So we may assume that $r\in\{2,3\}$.
Applying Claim \ref{clm:r=2}, we conclude that $(2, -2)$ or $(3, -3)\in L_{\cP}^{\mu}(H)$ (for $r=2$), or $(-2,1,1),(1,-2,1),(1,1,-2) \in L_{\cP}^{\mu}(H)$ (for $r=3$).

If $r=2$, then fix any $U\subseteq V(H)\setminus V_{0}$ with $\bfi_{\cP}(U)=(a, k+2-a)$ for some $0 \leq a \leq k+2$ and pick any $(a_{0}, b_{0})\in I_{\cP}^{\mu}(H)$.
We distinguish two cases.
First we assume that $(2,-2)\in L_{\cP}^{\mu}(H)$, then we have $(a_0+2i,b_0-2i)\in L_{\cP}^{\mu}(H)$ for any integer $i$.
Our goal is to show that there exist $i,j\in [2]$ such that $\bfi_{\cP}(U)-\bfu_i-\bfu_j\in L_{\cP}^{\mu}(H)$.
It suffices to prove that $(a-1,k+1-a)\in L_{\cP}^{\mu}(H)$ or $(a,k-a)\in L_{\cP}^{\mu}(H)$.
Note that $a -1$ and $a$ have different parities, so exactly one of $(a-1,k+1-a)$ and $(a,k-a)$ is in $L_{\cP}^{\mu}(H)$.
Second we assume that $(3,-3)\in L_{\cP}^{\mu}(H)$, then we have $(a_0+3i,b_0-3i)\in L_{\cP}^{\mu}(H)$ for any integer $i$.
Our goal is to show that there exist $i,j\in [2]$ such that $\bfi_{\cP}(U)-\bfu_i-\bfu_j\in L_{\cP}^{\mu}(H)$.
It suffices to prove that $(a-2,k+2-a)$, $(a-1,k+1-a)$, or $(a,k-a)$ is in $L_{\cP}^{\mu}(H)$.
Note that exactly one of the three consecutive integers $a-2$, $a-1$, and $a$ is congruent with $a_0$ modulo $3$.
Thus, exactly one of $(a-2,k+2-a)$, $(a-1,k+1-a)$, and $(a,k-a)$ is in $L_{\cP}^{\mu}(H)$.

Next we assume $r=3$ and fix any $U\subseteq V(H)\setminus V_{0}$ with $\bfi_{\cP}(U)=(x_1,x_2,x_3)$ for some nonnegative integers $x_1+x_2+x_3=k+2$.
Pick any $(y_1,y_2,y_3) \in I_{\cP}^{\mu}(H)$ and let $z_j= x_j-y_j$ for $j \in [3]$.
Note that exactly one of the three consecutive integers $z_2-z_1$, $z_2-z_1+1$, and $z_2-z_1+2$ is divisible by $3$.
Let $i,j \in \{1,3\}$ such that $\bfv:=(z_1',z_2',z_3')=(z_1,z_2,z_3)-\bfu_i-\bfu_j$ satisfies that $z_2'-z_1'$ is divisible by $3$.
Let $m'= (z_2'-z_1')/3$ and $m=m'-z_2'$.
Note that $z_1'+z_2'+z_3'=0$; then it is easy to see that
\[
\bfi_{\cP}(U)-\bfu_i-\bfu_j-(y_1,y_2,y_3)=\bfv=m'(-2,1,1)-m(1,1,-2) \in L_{\cP}^{\mu}(H).
\]
Thus, $\bfi_{\cP}(U)-\bfu_{i}-\bfu_{j}\in L_{\cP}^{\mu}(H)$, and the proof is complete.
\end{proof}

\section{Proof of Lemma~\ref{lem:abs2}}\label{sec4}
We start with the following definition.
Given a set $S$ of $2k-d$ vertices, an edge $e \in E(H)$ that is disjoint from $S$ is called \emph{$S$-absorbing} if there are two disjoint edges $e_1$ and $e_2$ in $E(H)$ such that $|e_1 \cap S|=k-\lfloor d/2\rfloor$, $|e_1 \cap e|=\lfloor d/2\rfloor$, $|e_2 \cap S| =k-\lceil d/2\rceil$, and $|e_2 \cap e|=\lceil d/2\rceil$ (see Figure~\ref{fig:abs}).
Note that this is not the absorbing structure in the usual sense because $e_1 \cup e_2$ misses $k-d$ vertices of $S \cup e$.
Let us explain how such absorbing structure works.
Consider a matching $M$ and a $(2k-d)$-set $S$, $V(M) \cap S=\emptyset$.
If $M$ contains an $S$-absorbing edge $e$, then one can ``absorb'' $S$ into $M$ by swapping $e$ for $e_1$ and $e_2$ ($k-d$ vertices of $e$ become uncovered).

\begin{figure}[!ht]
\begin{center}
\begin{tikzpicture}
[inner sep=2pt,
   vertex/.style={circle, draw=black, fill=white},
   ]
\begin{pgfonlayer}{background}    
\node[trapezium, draw,rounded corners,inner xsep=1pt, trapezium left angle=90, trapezium right angle=110,minimum height = 2.5cm,fill=blue!8] (labelTransform) at (-1.7,0.5) {};
\node[trapezium, draw,rounded corners,inner xsep=0.5pt, trapezium left angle=70, trapezium right angle=120,minimum height = 2.5cm,fill=blue!8] (labelTransform) at (0.4,0.5) {};

\draw[rounded corners,dashed] (-2.5,1) rectangle (2.5, 2.5);  
\draw[rounded corners] (-2.5,0) rectangle (2.3, -1.3);  
\end{pgfonlayer}
\node at (3, 1.7) {$S$};
\node at (3, -0.7) {$e$};
\node at (-1.4, 0.5)[color=blue] {$e_1$};
\node at (0.5, 0.5)[color=blue] {$e_2$};
\node at (-1.4, 1.3) {\small{$k-\lfloor d/2\rfloor$}}; 
\node at (-1.7, -0.4) {\small{$\lfloor d/2\rfloor$}}; 
\node at (0.9, 1.3) {\small{$k-\lceil d/2\rceil$}}; 
\node at (0.1, -0.4) {\small{$\lceil d/2\rceil$}}; 
\node at (1.2,-0.4) [vertex, inner sep=1pt] {};
\node at (2,-0.4) [vertex, inner sep=1pt] {};
\node at (1.65,-0.4) {$\cdots$};
\draw[decorate,decoration={brace,raise=4pt}] (2,-0.4) -- (1.2,-0.4); 
\node at (1.6,-0.8) {\small{$k-d$}};
\end{tikzpicture}
\caption{\small An $S$-absorbing edge $e$.}
\label{fig:abs}
\end{center}
\end{figure}
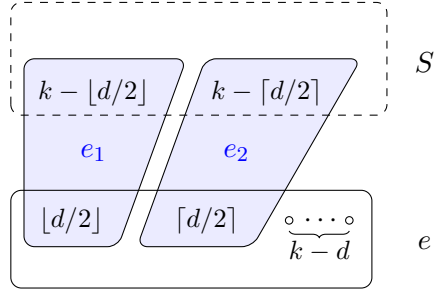

Below we restate and prove Lemma~\ref{lem:abs2}.
\begin{Lemma*}
For $\lceil 2k/3 \rceil \leq d \leq k-1$ and $\gamma'>0$, suppose $0<1/n\ll\beta\ll \gamma',1/k$.
Let $H$ be an $n$-vertex $k$-graph with $\delta _d (H) \geq \gamma' n^{k-d}$, then there exists a matching $M'$ in $H$ of size $|M'| \leq \beta n/k$ and such that for any subset $R\subseteq V(H)\setminus V(M')$ with $|R|\leq\beta^2 n$, $H[V(M')\cup R]$ contains a matching covering all but at most $2k-d-1$ vertices.
\end{Lemma*}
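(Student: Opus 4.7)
The plan is to follow the classical absorbing method of R\"odl, Ruci\'nski and Szemer\'edi~\cite{RRS06}, treating the $S$-absorbing edges defined just before the statement as the basic absorption unit. The argument has three parts: a counting lemma for $S$-absorbing edges, a random selection of an absorbing matching $M'$, and an iterative absorption procedure.

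\textbf{Counting.} I would first show that every $(2k-d)$-set $S\subseteq V(H)$ admits at least $c_0 n^k$ $S$-absorbing edges for some constant $c_0=c_0(k,\gamma')>0$. Fix any partition $S=S_1\cup S_2$ with $|S_1|=k-\lfloor d/2\rfloor$ and $|S_2|=k-\lceil d/2\rceil$. The hypothesis $d\geq \lceil 2k/3\rceil$ is used crucially here: a direct arithmetic check gives $|S_1|,|S_2|\leq d$, so each $S_i$ can be extended to a $d$-superset and the assumption $\delta_d(H)\geq \gamma' n^{k-d}$ applies. A standard double count over all $d$-extensions yields $\Omega(n^{k-|S_i|})$ edges containing $S_i$; discarding the $O(n^{k-|S_i|-1})$ edges that meet $S\setminus S_i$ gives $\Omega(n^{\lfloor d/2\rfloor})$ choices of $e_1\supseteq S_1$ with $e_1\cap S=S_1$, and for each such $e_1$ similarly $\Omega(n^{\lceil d/2\rceil})$ choices of $e_2\supseteq S_2$ with $e_2\cap S=S_2$ and $e_2\cap e_1=\emptyset$. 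Finally, the $d$-set $T:=(e_1\cup e_2)\setminus S$ has $\deg_H(T)\geq \gamma' n^{k-d}$, contributing $\Omega(n^{k-d})$ choices of $e\supseteq T$ disjoint from $S$. Multiplying gives $\Omega(n^{k})$ triples $(e_1,e_2,e)$; since each $S$-absorbing $e$ accounts for at most $O_k(1)$ such triples, the claim follows.

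\textbf{Random selection of $M'$.} I would then construct $M'$ by random selection plus alteration. Include each edge of $H$ independently with probability $p:=K\beta\, n^{1-k}$ for a suitably small $K=K(k,\gamma')>0$, producing a random family $\mathcal{F}$. Routine first-moment estimates together with Chernoff-type concentration show that with positive probability $\mathcal{F}$ satisfies $|\mathcal{F}|\leq \beta n/(2k)$, has at most $\beta^2 n/(4k)$ intersecting pairs, and contains at least $2\beta^2 n$ $S$-absorbing edges for every $(2k-d)$-set $S$ (here the counting step is used). Deleting one edge from each intersecting pair yields a matching $M'\subseteq \mathcal{F}$ with $|M'|\leq \beta n/k$ such that every $(2k-d)$-set $S$ still has at least $\beta^2 n$ $S$-absorbing edges in $M'$.

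\textbf{Iterative absorption.} Given any $R\subseteq V(H)\setminus V(M')$ with $|R|\leq \beta^2 n$, I would iterate the following. While $|R|\geq 2k-d$, pick an arbitrary $(2k-d)$-subset $S\subseteq R$ and an $S$-absorbing edge $e$ in the current matching that has not been used before (such $e$ exists because at most $\beta^2 n/k$ edges of $M'$ have been removed so far while at least $\beta^2 n$ $S$-absorbing edges of $M'$ exist); by definition there are $e_1,e_2\in E(H)$ with $e_1\cup e_2\subseteq S\cup e$, so they are automatically disjoint from every other edge of the current matching. Remove $e$ and insert $e_1,e_2$, then update $R\leftarrow (R\setminus S)\cup (e\setminus(e_1\cup e_2))$. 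Each step shrinks $|R|$ by exactly $(2k-d)-(k-d)=k$, so the process halts after at most $\beta^2 n/k$ steps with $|R|\leq 2k-d-1$; the resulting matching lies in $H[V(M')\cup R]$ and covers all but at most $2k-d-1$ vertices.

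The main obstacle is the counting step, and in particular the inequality $|S_1|,|S_2|\leq d$, which holds precisely when $d\geq \lceil 2k/3\rceil$. Outside this range the natural $S_i$ exceed $d$, so the minimum $d$-degree assumption can no longer be used directly to lower-bound the number of extensions $e_i$; this is exactly why the stronger density assumption together with the lattice-based Lemma~\ref{lem:abs1} is needed in the complementary range $k/2\leq d<\lceil 2k/3\rceil$.
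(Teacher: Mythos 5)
Your proposal is correct and follows essentially the same approach as the paper's proof: the definition and counting of $S$-absorbing edges, a random selection of size roughly $\beta n/k$ followed by the deletion of intersecting pairs, and the greedy absorption that trades one $S$-absorbing edge $e$ for the two edges $e_1,e_2$ contained in $S\cup e$. You correctly identify the role of the hypothesis $d\geq\lceil 2k/3\rceil$ (namely that $|S_1|=k-\lfloor d/2\rfloor$ and $|S_2|=k-\lceil d/2\rceil$ are both at most $d$, so the minimum $d$-degree bound applies via Proposition~\ref{prop:abs}); the only differences are cosmetic, e.g.\ you count the triples $(e_1,e_2,e)$ in the order $e_1$, then $e_2$, then $e$, whereas the paper builds $e$ directly one block of vertices at a time.
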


The proof of Lemma~\ref{lem:abs2} follows an idea of R\"{o}dl, Ruci\'{n}ski, and Szemer\'{e}di~\cite[Fact 2.2, Fact 2.3]{RRS09}, which was used for the case $d=k-1$.

\begin{proof}[Proof of Lemma~\ref{lem:abs2}]
Let $k,d,\gamma'$ be given and let $\beta$ be a constant such that $\beta\le\beta_0=(\gamma')^3/(12(k+1)!)$.
Denote $\ell_1:=\lfloor d/2\rfloor$ and $\ell_2:=\lceil d/2\rceil$, then $\ell_1+\ell_2=d$.
Let $H$ be an $n$-vertex $k$-graph with $\delta_d(H)\ge \gamma' n^{k-d}$.
Note that we have $\delta_{k-\ell_1}(H)\ge \gamma' n^{\ell_1}$ and $\delta_{k-\ell_2}(H)\ge \gamma' n^{\ell_2}$ from Proposition~\ref{prop:abs} and by $k-\ell_2\le k-\ell_1 \leq d$.

We first show that there
are many $S$-absorbing edges in $H$ for any $(2k-d)$-set of vertices $S$.
\begin{claim}
For every $S=\{u_1,\dots,u_{2k-d}\}\in\binom{V(H)}{2k-d}$, there are at least $\frac{1}{2}(\gamma')^3n^k/k!$ $S$-absorbing edges.
\end{claim}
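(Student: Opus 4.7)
The plan is to establish the bound via a direct counting argument. I would fix an arbitrary partition $S = S_1 \sqcup S_2$ with $|S_1| = k - \ell_1$ and $|S_2| = k - \ell_2$, and count ordered triples $(A_1, A_2, e)$ where $A_i \subseteq V(H) \setminus S$ is an $\ell_i$-set with $S_i \cup A_i \in E(H)$ for $i \in \{1,2\}$, the sets $A_1$ and $A_2$ are disjoint, and $e \in E(H)$ is disjoint from $S$ and contains $A_1 \cup A_2$. Each such triple witnesses that $e$ is $S$-absorbing: setting $e_i := S_i \cup A_i$ gives $|e_i \cap S| = k - \ell_i$ and $|e_i \cap e| = \ell_i$, while $e_1 \cap e_2 = \emptyset$ because $S_1 \cap S_2 = \emptyset$, $A_i \cap S = \emptyset$, and $A_1 \cap A_2 = \emptyset$.

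To lower-bound the number of such triples, I would first verify that $k - \ell_i \le d$ for $i \in \{1,2\}$; this reduces (via a short parity case analysis) to $\lfloor d/2 \rfloor \ge k - d$, which holds because $d \ge \lceil 2k/3 \rceil$. Then Proposition~\ref{prop:abs} applied to the hypothesis $\delta_d(H) \ge \gamma' n^{k-d}$ yields $\deg_H(S_i) \ge (1-o(1))\, \gamma'\, n^{\ell_i} (k-d)!/\ell_i!$, so after subtracting the $O(n^{\ell_i-1})$ extensions that hit the at most $2k$ previously forbidden vertices, there remain at least $(1-o(1))\,\gamma'\, n^{\ell_i}(k-d)!/\ell_i!$ valid choices of $A_i$. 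For each valid $(A_1, A_2)$, the $d$-set $A_1 \cup A_2$ is contained in at least $\gamma' n^{k-d}$ edges of $H$, of which at most $O(n^{k-d-1})$ meet $S$. Multiplying the three estimates gives at least $(1 - o(1))\,(\gamma')^3\, n^k\, ((k-d)!)^2/(\ell_1!\,\ell_2!)$ triples.

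Finally, I would observe that each $S$-absorbing edge $e$ arises from at most $\binom{k}{\ell_1,\ell_2,k-d} = k!/(\ell_1!\,\ell_2!\,(k-d)!)$ triples, since specifying the triple given $e$ amounts to partitioning $e$ into $A_1$, $A_2$, and a leftover block of size $k - d$. Dividing, the number of $S$-absorbing edges is at least $(1 - o(1))\,(\gamma')^3\, n^k\, ((k-d)!)^3/k! \ge \tfrac{1}{2}(\gamma')^3 n^k/k!$, using $(k-d)! \ge 1$ (since $d \le k-1$) and $n$ large. The main obstacle is really just the bookkeeping: the verification of $k - \ell_i \le d$ depends sharply on the assumption $d \ge \lceil 2k/3 \rceil$, and one must keep track of the multinomial correction $((k-d)!)^3$ carefully to avoid losing factors of $k!$ in the final bound.
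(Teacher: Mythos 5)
Your proof is correct and follows essentially the same route as the paper's: fix a partition of $S$ into blocks of sizes $k-\ell_1$ and $k-\ell_2$, count configurations by chaining the three degree bounds $\delta_{k-\ell_1}(H)$, $\delta_{k-\ell_2}(H)$, $\delta_d(H)$ (each justified via Proposition~\ref{prop:abs} and the inequality $k-\ell_i\le d$, which hinges on $d\ge\lceil 2k/3\rceil$), and then divide by an overcounting factor of at most $k!$. Your version is in fact a bit more careful than the paper's, since you track the multinomial factor $((k-d)!)^3/(\ell_1!\,\ell_2!\,(k-d)!)$ explicitly rather than absorbing everything into a loose ``divide by $k!$'' step; this confirms the paper's stated bound with room to spare.
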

\begin{proof}
Fix $k-\ell_1$ vertices $u_1,\dots,u_{k-\ell_1}$ in $S$ and let $e_1,e_2$ be as in the definition of $S$-absorbing edges.
Let us count only those $S$-absorbing edges $e$ for which the corresponding edge $e_1$ contains $u_1,\dots,u_{k-\ell_1}$.
We count the ordered $k$-tuples of distinct vertices $(v_1,\dots,v_k)$ such that $e = \{v_1,\dots, v_k\}$ is disjoint from $S$, $e_2\cap e=\{v_{\ell_1+1},\dots,v_{d}\}$, and $e_1=\{u_1,\dots,u_{k-\ell_1},v_1,\dots,v_{\ell_1}\}$, and divide the result by $k!$.

Note that $\{v_1,\dots,v_{\ell_1}\}$ must be a neighbor of an already fix $(k-\ell_1)$-tuple of vertices, so there are at least $\delta_{k-\ell_1}(H)-2kn^{\ell_1-1}$ choices for the $\ell_1$-tuple.
Recall that $\ell_2=d-\ell_1$.
Having selected $v_1,\dots, v_{\ell_1}$, $\{v_{\ell_1+1},\dots,v_{d}\}$ must be a neighbor of an already fixed $(k-\ell_2)$-tuple of vertices, so there are at least $\delta_{k-\ell_2}(H)-2kn^{\ell_2-1}$ choices for the $\ell_2$-tuple.
Having selected $v_1,\dots, v_d$, $\{v_{d+1},\dots,v_{k}\}$ must be a neighbor of an already fixed $d$-tuple of vertices, so there are at least $\delta_{d}(H)-2kn^{k-d-1}$ choices for the $(k-d)$-tuple.
Altogether since $n$ is large enough, there are at least $\left(\delta_{k-\ell_1}(H)-2kn^{\ell_1-1}\right)\left(\delta_{k-\ell_2}(H)-2kn^{\ell_2-1}\right)\left(\delta_{d}(H)-2kn^{k-d-1}\right)\ge \frac{1}{2}(\gamma')^3n^{\ell_1+\ell_2+k-d}=\frac{1}{2}(\gamma')^3n^k$ choices of the desired ordered $k$-tuples.
So there are at least $\frac{1}{2}(\gamma')^3n^k/k!$ $S$-absorbing edges in $H$.
\end{proof}

Now we pick the absorbing matching $M'$.
Select a random subset $M$ of $E(H)$, where each edge is chosen independently with probability $p=\beta n^{1-k}/(k+1)$.
Then, the expected size of $M$ is at most $\binom{n}{k}p < \beta n/(k+1)!$, and the expected number of intersecting pairs of edges in $M$ is at most $n^{2k-1}p^2<\beta^2 n$.
Hence, by Markov's inequality (see, e.g., \cite[inequality (1.3)]{JLR00}), with probability at least $1-1/2-1/k!$, $|M| \leq \beta n/k$ and $M$ contains at most $2\beta^2n$ intersecting pairs of edges.
Moreover, for every $(2k-d)$-set of vertices $S$, let $X_S$ be the number of $S$-absorbing edges in $M$.
Then we have
\[
\mathbb{E}(X_S) \geq p \cdot \frac{1}{2}(\gamma')^3n^k /k! =\frac{\beta (\gamma')^3 n}{2(k+1)!}.
\]
By Chernoff's bound (see, e.g., \cite[Theorem 2.1]{JLR00}), with probability $1-o(1)$, we have that $X_S \geq \frac{1}{2} \mathbb{E}(X_S) \geq \frac{\beta (\gamma')^3 n}{4(k+1)!}$ for all $(2k-d)$-sets $S$ in $H$.

Thus, there is an $M \subseteq E(H)$ satisfying all the properties above.
We delete one edge from each intersecting pairs of edges and denote the resulting subset by $M'$, which is a matching.
So $|M'| \leq \beta n/k$, and for every $(2k-d)$-set of vertices $S$, $M'$ contains at least $\frac{\beta (\gamma')^3 n}{4(k+1)!}-2 \beta^2 n \ge\beta^2 n$ $S$-absorbing edges by the definition of $\beta$.

It remains to show that, for any $R\subseteq V(H)\setminus V(M')$ with $|R|\leq\beta^2 n$, $H[V(M')\cup R]$ contains a matching covering all but at most $2k-d-1$ vertices.
Fix $R\subseteq V(H)\setminus V(M')$ with $|R|\leq\beta^2 n$ and any $(2k-d)$-tuple $S$ of $R$, then there are at least $\beta^2 n$ $S$-absorbing edges in $M'$.
Take an $S$-absorbing edge $e$, we replace $M'$ by $M'_S:=(M'\setminus \{e\}) \cup \{e_1, e_2\}$, decreasing the number of uncovered vertices of $R$ by $k$. Since we have at most $\beta^2 n/k$ iterations, there will always be an $S$-absorbing edge available in $M'$.
In the end, we have at most $2k-d-1$ vertices left uncovered in $H[V(M')\cup R]$ and we are done.
\end{proof}

\bibliographystyle{plain}

\end{document}